\theoremstyle{plain}
\newtheorem{thm}{Theorem}[section]
\newtheorem{cor}[thm]{Corollary}
\newtheorem{prop}[thm]{Proposition}
\numberwithin{equation}{section}
\newtheorem{conjecture}{Conjecture}  
\newtheorem{conj}[conjecture]{Conjecture}  
\theoremstyle{definition}
\newtheorem{example}[thm]{Example}
\newtheorem{lemma}[thm]{Lemma}
\newtheorem{rmk}[thm]{Remark}
\theoremstyle{remark}
\newcommand{\BC}{{\mathbb{C}}}
\newcommand{\BE}{{\mathbb{E}}}
\newcommand{\BG}{{\mathbb{G}}}
\newcommand{\BL}{{\mathbb{L}}}
\newcommand{\BQ}{{\mathbb{Q}}}
\newcommand{\BZ}{{\mathbb{Z}}}
\newcommand{\CC}{{\mathcal C}}
\newcommand{\CL}{{\mathcal L}}
\newcommand{\CO}{{\mathcal O}}
\newcommand{\pt}{{\mathsf{p}}}
\newcommand{\td}{{\mathrm{td}}}
\DeclareFontFamily{OT1}{rsfs}{}
\DeclareFontShape{OT1}{rsfs}{n}{it}{<-> rsfs10}{}
\DeclareMathAlphabet{\curly}{OT1}{rsfs}{n}{it}
\newcommand\Ext{\operatorname{Ext}}
\newcommand\Hom{\operatorname{Hom}}
\newcommand{\p}{\mathbb{P}}
\newcommand\Spec{\operatorname{Spec}}
\newcommand{\Mbar}{{\overline M}}
\newcommand{\vir}{{\text{vir}}}
\newcommand{\Pic}{\mathop{\rm Pic}\nolimits}
\newcommand{\PT}{\mathsf{PT}}
\newcommand{\Pt}{\mathsf{pt}}
\newcommand{\DT}{\mathsf{DT}}
\newcommand{\dt}{\mathsf{dt}}
\newcommand{\GW}{\mathsf{GW}}
\newcommand{\VW}{\mathsf{VW}}
\newcommand{\vw}{\mathsf{vw}}
\newcommand{\QMod}{\mathsf{QMod}}
\newcommand{\Mod}{\mathsf{Mod}}
\newcommand{\Hilb}{\mathsf{Hilb}}
\newcommand{\ch}{\mathsf{ch}}
\newcommand{\SL}{\mathrm{SL}}
\newcommand{\Obs}{\mathrm{Obs}}
\newcommand{\inv}{{\mathrm{inv}}}
\newcommand{\chivir}{{\widehat{\chi}}}
\newcommand{\Exp}{{\mathrm{Exp}}}
\newcommand{\Log}{{\mathrm{Log}}}
\begin{document}
\baselineskip=14.5pt
\title{Towards refined curve counting on the Enriques surface I: K-theoretic refinements}

\author{Georg Oberdieck}

\address{Universit\"at Heidelberg, Institut f\"ur Mathematik}
\email{georgo@uni-heidelberg.de}
\date{\today}

\begin{abstract}
We conjecture an explicit formula for the $K$-theoretically refined  Vafa-Witten invariants of the Enriques surface.
By a wall-crossing argument the conjecture is equivalent to a new conjectural formula for the K-theoretically refined Pandharipande-Thomas invariants of the local Enriques surface.
Evidence for the conjecture is given in several cases.
We also comment on the case of K3 surfaces previously studied by Thomas.	
\end{abstract}

\maketitle

\setcounter{tocdepth}{1} 
\tableofcontents

\section{Introduction}
\subsection{Overview}
Let $Y$ be an Enriques surface and let
\[ X = K_Y, \quad p : X \to Y \]
be the total space of the canonical bundle over it.
In this note we investigate the 
refined Vafa-Witten (VW) invariants of $Y$, denoted
\[ \VW(v) \in \BQ[ t^{1/2}, t^{-1/2} ]. \]
These invariants were conjecturally defined by Thomas \cite{Thomas}
and proven to exist by Liu \cite{Liu}.
They count $K$-theoretically Gieseker semistable compactly supported sheaves $F$ on $X$ with Chern character $\ch(p_{\ast} F) = v \in H^{\ast}(Y,\BZ)$.
Here $t$ is the $K$-theory class of the standard representation of $\BC^{\ast}$, and we let $\BC^{\ast}$ act on $X$ by scaling the fibers.

If $v$ is primitive and the polarization on $Y$ is chosen generic, so that semistability is equal to stability, then the Vafa-Witten invariant is defined by
\[ \VW(v) = \chi(M(v), \widehat{\CO}^{\text{vir}}) \]
where $M(v)$ is the moduli space of compactly supported Gieseker stable sheaves on $X$ with Chern character $v$ and $\widehat{\CO}^{\mathrm{vir}}$ is the Nekrasov-Okounkov twisted virtual structure sheaf, see Section~\ref{subsec:NO twists}.
In most cases this is simply the $\chi_{-y}$-genus of a smooth moduli space of stable sheaves on $Y$, see Example~\ref{example:VW clasical}. 
For general $v$, $\VW(v)$ is defined through Joyce pairs and a wall-crossing formula. 
Since $h^{1,0}(Y) = h^{2,0}(Y) = 0$, the Vafa-Witten invariants of $Y$ can also be viewed as the twisted $K$-theoretic generalized Donaldson-Thomas invariants of $X$.

Under specializing the equivariant parameter $t \mapsto 1$ we obtain
the usual unrefined Vafa-Witten invariants of Tanaka-Thomas \cite{TT1,TT2} denoted by
\[ \VW^{\mathrm{unref}}(v) := \VW(v)|_{t=1}. \]

The unrefined Vafa-Witten invariants of $Y$ were fully computed in \cite{Enriques}. In this paper we will conjecture an explicit formula for the refined Vafa-Witten invariant of the Enriques surface.
Before doing so, let us recall the unrefined computation:
\begin{thm}[{\cite{Enriques}}]
For any effective $(r,\beta,n) \in H^{\ast}(Y,\BZ)$ we have
\[
\VW^{\mathrm{unref}}(r,\beta,n) =
2 \sum_{\substack{k|(r,\beta,n) \\ k \geq 1 \textup{ odd}}}
\frac{1}{k^2} e\left( \Hilb^{\frac{\beta^2 - 2rn - r^2}{2k^2} + \frac{1}{2}}(Y) \right). \]
\end{thm}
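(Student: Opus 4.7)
\medskip

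\noindent\textbf{Plan of proof.} I would split the argument according to whether the Mukai vector $v=(r,\beta,n)$ is primitive, prove the formula for primitive $v$ by reducing to Hilbert schemes, and then extend to the general case via wall-crossing.

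\medskip

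\noindent\textbf{Primitive case.} First I would take $v$ primitive and choose the polarization on $Y$ generic enough that Gieseker semistability coincides with stability. The moduli space $M(v)$ of compactly supported stable sheaves on $X$ is then smooth, and by known deformation-type results for moduli of sheaves on Enriques surfaces (Sacc\`a, Nuer, Yoshioka), $M(v)$ is deformation equivalent to a disjoint union of two copies of $\Hilb^{(v^2+1)/2}(Y)$, where $v^2 := \beta^2 - 2rn - r^2$, and the two components are exchanged by the involution of tensoring with $K_Y$. Under the specialization $t \to 1$, the Nekrasov--Okounkov twisted virtual structure sheaf $\widehat{\CO}^{\vir}$ computes the topological Euler characteristic of the moduli space (because in the smooth case the virtual class is Poincar\'e dual to the Euler class of the obstruction bundle). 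This produces precisely the $k=1$ term $2\, e(\Hilb^{(v^2+1)/2}(Y))$.

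\medskip

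\noindent\textbf{Non-primitive case.} For $v = k v_0$ with $k \geq 2$, I would invoke the wall-crossing framework that already enters the definition of $\VW(v)$ for non-primitive $v$, namely the Joyce--Song wall-crossing formula expressed through Joyce pairs and specialized from its $K$-theoretic refinement to $t=1$. The universal shape of that formula rewrites the strictly semistable invariant as a sum indexed by divisors $k \mid v$, in which each summand is the classical multi-cover contribution $\frac{1}{k^2}$ times the primitive invariant at $v/k$. Plugging in the primitive formula from the previous step yields the stated sum over $k$.

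\medskip

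\noindent\textbf{Parity restriction and main obstacle.} The subtlest point is why only odd $k$ survive. The order-two action of $(-)\otimes K_Y$ acts on every moduli space, and for $k$ even there appear strictly semistable polystable objects of the form $F \oplus (F \otimes K_Y)$ whose automorphism groups and stabilizers enter the wall-crossing combinatorics differently. I would argue that the even-$k$ contributions cancel by pulling back to the K3 double cover $\pi : \widetilde Y \to Y$, where the refined Vafa--Witten invariants are controlled by Thomas's K3 case, and exploiting that Enriques-invariant contributions from the $K_Y$-twisted locus vanish. I expect this parity analysis---matching the wall-crossing combinatorics with the $K_Y$-twist action on $\widehat{\CO}^{\vir}$, tracking the signs it introduces, and showing even-$k$ terms cancel exactly---to be the technical heart of the proof and the main obstacle.
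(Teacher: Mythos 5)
The primitive, odd-rank part of your plan is essentially sound: for $2 \nmid \gcd(r,\beta)$ the sheaves live on the zero section (Lemma~\ref{lemma:sheaves on X and Y comparision}), the moduli space is smooth with two components indexed by the determinant, and for $r$ odd each component is \emph{birational} --- not deformation equivalent, but birationality suffices for Hodge and Euler numbers by Batyrev--Kontsevich --- to $\Hilb^{(v^2+1)/2}(Y)$ (Theorem~\ref{thm:BNYS}), giving the $k=1$ term. For primitive $v$ of even rank, however, your reduction to Hilbert schemes is false: there $v^2$ is even, $(v^2+1)/2$ is not an integer, and $M(v)$ is an odd-dimensional Calabi--Yau, not two copies of a Hilbert scheme. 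The formula survives in that case only because an odd-dimensional closed manifold has vanishing Euler characteristic, which is consistent with the convention $e(\Hilb^n Y)=0$ for fractional $n$; your argument as written does not cover it.

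The genuine gap is the non-primitive case. The Joyce--Song formalism does \emph{not} come with a ``universal shape'' expressing $\VW(kv_0)$ as a sum of $\frac{1}{k^2}$ times the primitive invariant at $v_0$: the $1/k^2$ multiple-cover structure, and a fortiori the restriction to odd $k$, is precisely the nontrivial content of the theorem rather than an input, and assuming it is circular. The proof in \cite{Enriques} runs in the opposite direction: one proves the Klemm--Mari\~{n}o formula \eqref{unrefined PT} for the PT invariants of $K_Y$ (via Gromov--Witten theory and degeneration to the Enriques Calabi--Yau threefold), one shows via the derived monodromy group that $\VW(v)$ depends only on $v^2$, the divisibility and the type (Theorem~\ref{prop:DT dependence}), and one then \emph{inverts} Toda's wall-crossing equation (Theorem~\ref{thm:Todas formula}) to solve for the $\VW(v)$ uniquely from the known PT side. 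In that mechanism the odd-$k$ sum and the $\Exp^{(2)}$ on the PT side are two faces of the same $\BZ_2$-structure of the K3 fibration $K_Y \to [\BC/\BZ_2]$; the even-$k$ terms are not present to begin with, rather than cancelling. Your proposed cancellation argument via the covering K3 surface is not something one can run inside the wall-crossing combinatorics alone, and without the PT-side input nothing in your scheme pins down the non-primitive invariants.
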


Here the Euler characteristic of the Hilbert scheme of points of $Y$ is given by
\[ \sum_{n} e(\Hilb^n Y) q^n = \prod_{n \geq 1} \frac{1}{(1-q^n)^{12}}, \]
and we set $e(\Hilb^n Y) = 0$ if $n$ is fractional.

We then have the following generalization to the refined invariants:
\begin{conj} \label{conj:VW} For any effective $(r,\beta,n) \in H^{\ast}(Y,\BZ)$ we have
\[ \VW(r,\beta,n) = 2 \sum_{\substack{k|(r,\beta,n) \\ k \geq 1 \textup{ odd}}}
\frac{1}{k [k]_t} \chivir_{-t^k}\left( \Hilb^{\frac{\beta^2 - 2rn - r^2}{2k^2} + \frac{1}{2}}(Y) \right), \]
where
\begin{itemize}
\item 
$\chivir_{-t}(X) := (-1)^{\dim(X)} t^{-\dim(X)/2} \chi_{-t}(X)$
denotes the (signed) normalized $\chi_y$-genus of a smooth projective variety $X$, and 
\item $[n]_t := \frac{ t^{n/2} - t^{-n/2}}{t^{1/2} - t^{-1/2}}$
is the quantum integer.
\end{itemize}
\end{conj}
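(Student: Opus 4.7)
The plan is to mirror the two-step proof of the unrefined formula in \cite{Enriques}, refining each input to $K$-theory. \emph{Step one} uses $K$-theoretic wall-crossing to reduce to the case where $v=(r,\beta,n)$ is primitive. \emph{Step two} identifies the primitive Vafa-Witten invariant with the $\chi_{-t}$-genus of a smooth surface moduli space on $Y$, which in turn is matched to a Hilbert scheme of points on $Y$.

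For the wall-crossing step one applies Liu's $K$-theoretic refinement of Joyce-Song wall-crossing to compactly supported sheaves on $X$ under a generic Gieseker stability. The multiple-cover structure
\[ \VW(r,\beta,n) \ = \ \sum_{\substack{k\mid (r,\beta,n) \\ k \text{ odd}}} \frac{1}{k\,[k]_t}\,\VW^{\mathrm{prim}}(v/k)\big|_{t\mapsto t^k} \]
is the expected $K$-theoretic analogue of the multiple-cover formula: each $k$-fold cover contributes the primitive invariant with equivariant parameter substituted by $t^k$, weighted by $1/(k[k]_t)$, which at $t=1$ recovers the $1/k^2$ in the unrefined theorem. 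The restriction to \emph{odd} $k$ should follow as in \cite{Enriques} from the fact that $K_Y$ is $2$-torsion, so even-$k$ contributions factor through sheaves on the K3 cover and cancel numerically.

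For the primitive case, Liu's construction identifies $\VW(v) = \chi(M(v),\widehat{\CO}^{\mathrm{vir}})$ with (an explicit normalization of) the $\chi_{-t}$-genus of the smooth underlying moduli space $M_Y(v)$ of stable sheaves on $Y$; the remaining statement is that
\[ \chivir_{-t}\!\left( M_Y(v)\right) = \chivir_{-t}\!\left(\Hilb^{N}(Y)\right), \quad N = \tfrac{v^2+1}{2}. \]
One would try to construct, via a chain of wall-crossings in Bridgeland stability or twisted Fourier-Mukai equivalences from a universal quasi-family, a birational map or derived equivalence linking $M_Y(v)$ to the rank-one moduli $\Hilb^N(Y)$. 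This is the hardest step and the reason the statement is only conjectural: unlike the K3 case, Enriques moduli spaces admit no holomorphic symplectic form, the $\chi_{-y}$-genus is not known to be a Mukai-lattice invariant, and $M_Y(v)$ may acquire extra components of $K_Y$-twisted sheaves that must be accounted for. Absent such a general identification, the realistic route is the one adopted in the paper: verify the conjecture on slices where both sides are directly computable (rank one via G\"ottsche-type formulas, small $v$ via equivariant localization on Hilbert schemes, and selected higher-rank classes by pullback from the K3 cover).
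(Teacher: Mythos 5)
The statement you are addressing is Conjecture~\ref{conj:VW}; the paper offers no proof of it, only an equivalence with a conjectural PT formula (Theorem~\ref{thm:PT}, via the refined Toda wall-crossing of Kuhn--Liu--Thimm), a reformulation (Lemma~\ref{lemma:equivalent conj}), and evidence in special cases. Your proposal is likewise not a proof, but it also misplaces the difficulty in two ways. First, your step one asserts that wall-crossing yields the multiple-cover structure $\VW(v)=\sum_{k\mid v,\,k\text{ odd}}\frac{1}{k[k]_t}\VW^{\mathrm{prim}}(v/k)|_{t\mapsto t^k}$. It does not: what wall-crossing gives (Theorem~\ref{prop:DT dependence}) is only that $\VW(v)$ depends on $v$ through the Mukai square, the divisibility and the type. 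The odd-divisor decomposition with coefficients $1/(k[k]_t)$ is taken in the paper as the \emph{definition} of the auxiliary quantity $\vw$, and the conjectural content is precisely that the resulting $\vw(v)$ is independent of divisibility and type (Lemma~\ref{lemma:equivalent conj}(i)) --- for instance that $(0,0,1)$ and $(0,f,0)$, which have the same square but different types and non-isomorphic moduli spaces, give the same answer. Your heuristic that even-$k$ contributions factor through the K3 cover and cancel is a plausibility argument for the shape of the formula, not a derivation.

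Second, your step two aims to prove $\chivir_{-t}(M_Y(v))=\chivir_{-t}(\Hilb^N(Y))$ with $N=(v^2+1)/2$ by constructing a birational identification. For primitive $v$ of odd rank this identification is already known (Theorem~\ref{thm:BNYS}(i)), and since birational K-trivial varieties have equal Hodge numbers, this case is not where the conjecture lives. The genuinely open primitive case is the one with $r$ even and $2\nmid\beta$: there $v^2$ is even, so $N$ is half-integral, $\Hilb^N(Y)$ is empty by convention, and the conjecture asserts a \emph{vanishing} $\chi_{-t}(M_Y(0,\beta,1))=0$ (Conjecture~\ref{conj:Vanishing chi_t}, equivalently Lemma~\ref{lemma:equivalent conj}(ii)). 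No birational map to a Hilbert scheme of points can produce this; by Sacc\`a's work the moduli space is a nonempty odd-dimensional Calabi--Yau, and the vanishing is a nontrivial statement about its Hodge numbers, verified only in low dimension. Your closing paragraph correctly anticipates that verification on computable slices is the realistic route, and that is indeed what the paper does (fiber classes, points, low-degree classes via G\"ottsche--Shende, and consistency with refined holomorphic anomaly equations).
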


The virtual $\chi_y$-genera of the Hilbert scheme are given by G\"ottsche's formula:
\[
\sum_{n \geq 0} \chivir_{-t}( \Hilb^n(Y))
=
\prod_{m \geq 1} \frac{1}{(1-t^{-1} q) (1-q^m)^{10} (1-t q^m)}.
\]

The formula in Conjecture~\ref{conj:VW} is remarkable, because closed formulas for Vafa-Witten invariants in arbitrary rank are very rare, see \cite{GK1, GK2} for results in rank 2 and 3 and also \cite{Thomas}.
An exception is the K3 surface, where the following formula was recently proven by Thomas \cite{ThomasK3}:
\[
\VW^{K3}(r,\beta,n) = \sum_{k|(r,\beta,n)} \frac{1}{[k]_t^2} \chi_{-t^k}(\Hilb^{\frac{\beta^2-2rn-2r^2}{2k^2}+1}(K3)).
\]
This formula can be viewed as a "multiple-cover formula".
The denominator $1/[k]_t^2$ can be traced back to the use of the reduced virtual class in Pandharipande-Thomas theory for K3 surfaces.
On the other hand, the denominator $1/k [k]_t$ that appears in the Enriques formula is more natural for the unreduced theory and appears also in the multiple-cover behaviour observed in cohomological DT theory, see \cite{Thomas} for a discussion.

Conjecture~\ref{conj:VW} has the following basic consequence for moduli of sheaves on Enriques surfaces for which the author unfortunately does not know a proof.
\begin{conj} \label{conj:Vanishing chi_t}
Let $v=(r,\beta,n) \in H^{\ast}(Y,\BZ)$ be primitive with $r$ even and $2\nmid \beta$. Let $M^Y(v)$ be the (automatically smooth) moduli space of stable sheaves on $Y$ with Chern character $v$ with respect to a generic polarization. Then the $\chi_t$-genus of $M(v)$ vanishes:
\[ \chi_{-t}(M(v)) = 
\sum_{p,q} (-1)^{p+q} h^{p,q}(M(v)) t^p =0. \]
\end{conj}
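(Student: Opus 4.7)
The plan is to derive Conjecture~\ref{conj:Vanishing chi_t} as a consequence of the main Conjecture~\ref{conj:VW}. I would first invoke Example~\ref{example:VW clasical}: for primitive $v$ with generic polarization and $M^Y(v)$ smooth (as the statement asserts), the refined Vafa--Witten invariant reduces, under $\BC^{\ast}$-localization at the zero section $i: Y \hookrightarrow X = K_Y$, to the normalized $\chi_y$-genus of the moduli of stable sheaves on $Y$:
\[ \VW(v) = \chivir_{-t}(M^Y(v)). \]
The smoothness itself is a consequence of Serre duality: the obstruction $\Ext^2(F,F)_0 \cong \Hom(F,F\otimes K_Y)_0^{\ast}$ vanishes by stability whenever $F \not\cong F \otimes K_Y$, and the hypotheses on $(r,\beta)$ are designed to rule out $F \cong \pi_{\ast} G$, the only possible source of such an isomorphism (here $\pi:\widetilde Y \to Y$ is the K3 cover).

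Next I would substitute $v=(r,\beta,n)$ into Conjecture~\ref{conj:VW}. Since $v$ is primitive, the only odd divisor is $k=1$, and the formula collapses to
\[ \VW(v) = 2 \, \chivir_{-t}\bigl(\Hilb^N(Y)\bigr), \qquad N = \tfrac12\bigl(\beta^2 - 2rn - r^2\bigr) + \tfrac12. \]
The crucial numerical input is that the N\'eron--Severi lattice $H^2(Y,\BZ)/\text{tors} \cong U \oplus E_8(-1)$ of an Enriques surface is even, so $\beta^2 \in 2\BZ$ for every $\beta \in H^2(Y,\BZ)$. Combined with $r$ even (so $r^2$ and $2rn$ are even), this forces $\beta^2 - 2rn - r^2 \in 2\BZ$, and hence $N \in \BZ + \tfrac12$ is a half-integer. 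By the convention $\chivir_{-t}(\Hilb^N Y) = 0$ for non-integer $N$, we conclude $\VW(v)=0$, and therefore $\chi_{-t}(M^Y(v)) = 0$ via the first identification.

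The principal obstacle is that this route is entirely conditional on the open Conjecture~\ref{conj:VW}; in particular the hypothesis $2 \nmid \beta$ plays no role in the numerical cancellation and is used only to secure smoothness. For an unconditional proof I would try to exploit the free involution $\tau: F \mapsto F \otimes K_Y$ on $M^Y(v)$, which is well-defined (since $r$ is even, so $\ch(F\otimes K_Y)=v$) and fixed-point-free (by smoothness). Unfortunately the Lefschetz fixed-point theorem for $\tau$ yields only the single scalar identity
\[ \sum_{p,q}(-1)^{p+q}\operatorname{tr}\!\bigl(\tau^{\ast}\,\big|\,H^{p,q}(M^Y(v))\bigr) = 0, \]
which is strictly weaker than the polynomial vanishing of $\chi_{-t}$. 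A more geometric variant would be to realize $M^Y(v)$, via pullback along $\pi$, as (a component of) the Lagrangian fixed locus of the anti-symplectic Enriques involution on the hyperk\"ahler moduli $\widetilde M := M^{\widetilde Y}(\pi^{\ast} v)$; the Lagrangian structure $N_{M^Y(v)/\widetilde M}\cong \Omega^1_{M^Y(v)}$ should yield enough constraints on the Hodge numbers of $M^Y(v)$ to force $\chi_{-t}(M^Y(v))=0$. Converting these constraints into the full polynomial identity is what I expect to be the hard part.
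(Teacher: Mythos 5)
Your derivation is correct and is precisely what the paper means when it calls this a ``basic consequence'' of Conjecture~\ref{conj:VW}: for primitive $v$ only $k=1$ survives, the evenness of the Enriques lattice together with $r$ even forces $\tfrac{v^2}{2}+\tfrac12 \notin \BZ$ so the Hilbert-scheme term vanishes by convention, and the hypothesis $2\nmid\beta$ enters only through Lemma~\ref{lemma:sheaves on X and Y comparision} and Theorem~\ref{thm:Nuer Yoshioka}(a) to identify $\VW(v)=\chivir_{-t}(M^Y(v))$ with $M^Y(v)$ smooth. The paper gives no unconditional argument --- it explicitly states the author does not know a proof --- so the conditionality you flag is not a gap in your write-up but the actual state of the problem; your closing speculations (Lefschetz for the free involution $F\mapsto F\otimes K_Y$, or the Lagrangian fixed locus on the covering K3 moduli space) go beyond anything in the paper and correctly identify why the scalar Lefschetz identity is too weak to give the full polynomial vanishing.
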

By a result of Sacca, the moduli spaces in this conjecture are always odd-dimensional Calabi-Yau manifolds \cite{Sacca, Beckmann}.
If it is $1$-dimensional, then $M(v)$ is isomorphic to an elliptic curve, so the vanishing in Conjecture~\ref{conj:Vanishing chi_t} is trivial.
The threefold case was proven by Sacca in \cite{Sacca} (she computed all Hodge numbers).
We give evidence in the 5-fold and 7-fold case in
Section~\ref{subsec:Computations 3 low degree} based on computations of G\"ottsche-Shende.

\subsection{Pandharipande-Thomas theory}
Our main insight into the Vafa-Witten theory of $Y$ is through the
K-theoretic refined Pandharipande-Thomas invariants of the local Enriques surface. These are defined by
\[
\PT_{n,\beta} = \chi( P_{n,\beta}(X), \widehat{\CO}^{\text{vir}}) \in \BQ[ t^{1/2}, t^{-1/2} ]
\]
where $P_{n,\beta}(X)$ is the moduli space of stable pairs $(F,s)$
satisfying $\mathrm{ch}_2(F) = \beta \in H_2(X,\mathbb{Z})$ and $\chi(F) = n$,
and a stable pair $(F,s)$ on $X$ consists  by definition of 
a pure $1$-dimensional sheaf $F$
and a section $s \in H^0(X,F)$ with zero-dimensional cokernel.

Specializing the equivariant parameter we obtain the usual PT invariants
obtained by integrating the virtual class
\[ 
\PT^{\mathrm{unref}}_{n,\beta} := \int_{[ P_{n,\beta}(X) ]^{\vir}} 1 =
\PT_{n,\beta}|_{t=1}. \]
These unrefined invariants are equivalent to the Gromov-Witten invariants of the local Enriques surfaces and were determined in \cite{Enriques} as follows:

Given a power series $f(x_1,\ldots,x_n)$ with vanishing constant term we define a modified plethystic exponential by
\[
\Exp^{(2)}( f(x_1,\ldots, x_n) ) = \exp\left( \sum_{\substack{k \geq 1 \\ k \text{ odd}}} \frac{f(x_1^k, \ldots, x_n^k)}{k} \right).
\]

\begin{thm}[Klemm-Mari\~{n}o formula \cite{KM1}, proven in {\cite{Enriques}}] We have
\begin{equation} \label{unrefined PT}
\sum_{\beta \geq 0} \sum_{n \in \BZ} \PT^{\mathrm{unref}}_{n,\beta} (-p)^n Q^{\beta}
=
\Exp^{(2)}\left( \sum_{\beta > 0} \sum_{r \in \BZ} 2 \omega(r,\beta^2/2) p^r Q^{\beta} \right)
\end{equation}
where the $\omega(r,n) \in \BQ$ are defined by the expansion
\[
\sum_{n,r} \omega(r,n) p^r q^n = \prod_{\substack{m \geq 1 \\ m \text{ odd}}} \frac{1}{(1-p^{-1} q^m)^2 (1-q^m)^{4} (1-p q^m)^2}
			\prod_{m \geq 1} \frac{1}{(1-q^m)^{8}}.
\]
\end{thm}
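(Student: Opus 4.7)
The plan is to use the GW/PT correspondence to translate the assertion into a statement about Gromov-Witten invariants of the local Enriques threefold $X = K_Y$, and then to compute those invariants by passing to the étale K3 double cover $\pi : \widetilde Y \to Y$ with covering involution $\iota$.

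First I would apply the GW/PT correspondence of MNOP/Pandharipande-Pixton to rewrite the left hand side of \eqref{unrefined PT} as a Gopakumar-Vafa-type generating series for $X$. The appearance of the modified plethystic exponential $\Exp^{(2)}$ on the right encodes the expectation that BPS contributions on the local Enriques come only from odd-multiplicity covers of primitive classes, a structural consequence of the $\BZ/2$ quotient from the K3 double cover. The theorem therefore reduces to (a) showing that all even-multiplicity contributions to the BPS series of $X$ vanish, and (b) identifying the primitive BPS generating series of $X$ with $\omega(r,n)$.

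I would approach both (a) and (b) by relating BPS invariants of $X$ to $\iota$-equivariant BPS invariants of the local K3 $K_{\widetilde Y}$. An even multiple of a curve class on $Y$ pulls back to an $\iota$-invariant class on $\widetilde Y$, and the residual free $\BZ/2$-action on the associated moduli of maps should force the corresponding contributions to cancel. For primitive classes, the Katz-Klemm-Vafa formula (proven by Pandharipande-Thomas for K3) gives the full primitive BPS series of $K_{\widetilde Y}$, and I would extract the product for $\omega(r,n)$ by decomposing the KKV product according to the $\pm 1$-eigenspaces of $\iota^{\ast}$ on $H^{\ast}(\widetilde Y, \BZ)$: the invariant part (classes descending to $Y$) should produce the odd-$m$ factors $(1-p^{\pm 1}q^m)^{-2}(1-q^m)^{-4}$, while the anti-invariant part should contribute the additional $\prod_{m \geq 1}(1-q^m)^{-8}$.

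The principal obstacle is that a generic stable pair on $X$ does not admit a canonical $\iota$-equivariant lift to $K_{\widetilde Y}$, so the passage between PT theories on the two sides requires care. I would handle this by working relatively over a deformation family of Enriques surfaces and applying a wall-crossing or motivic comparison between PT on $X$ and the $\iota$-fixed part of PT on $K_{\widetilde Y}$, in the spirit of the K3-fibration strategy of Maulik-Pandharipande-Thomas. The identification with the explicit $\omega(r,n)$ would then be pinned down by checking primitive and low-genus cases directly and invoking a universal deformation argument across the moduli of Enriques surfaces to extend the match to all classes.
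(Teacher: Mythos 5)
Your proposal reconstructs the original Klemm--Mari\~no physics derivation (GW/PT correspondence, passage to the K3 double cover, eigenspace decomposition of the KKV product). That is not the route by which the theorem was proven, and the two steps you qualify with ``should'' are precisely the points that resisted proof for over a decade. (a) The claim that a ``residual free $\BZ/2$-action on the moduli of maps forces even-multiplicity contributions to cancel'' is not an argument: a free involution on a moduli space makes Euler-characteristic-type contributions \emph{even}, not zero, and multiple covers of a fixed curve need not interact with the deck transformation at all. Moreover the $\Exp^{(2)}$ structure is a statement about the total multiple-cover series (odd $k$ only, with invariants depending only on $\beta^2$), not a term-by-term vanishing of even-degree contributions; the genuinely hard vanishing hiding behind it concerns classes whose pullback to the K3 cover is $2$-divisible. (b) There is no theorem extracting Enriques BPS invariants from the K3 KKV formula by decomposing $H^{*}(\widetilde{Y},\BZ)$ into $\iota^{*}$-eigenspaces: the relation between curves on $Y$ and on $\widetilde{Y}$ is governed by $\pi_1$ (a stable map to $Y$ either lifts or factors through a connected \'etale double cover of its domain), and no rigorous comparison of this shape is known. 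Finally, ``checking low-genus cases and invoking deformation invariance'' cannot close the gap: monodromy/deformation invariance reduces the computation to one class per orbit but gives no leverage in the genus (equivalently $n$) direction, which is where all the content of the all-genus product lies.

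The actual proof, given in \cite{Enriques} and whose mechanism is summarized in Section~\ref{sec:invariants of Enriques} of this paper, runs entirely on the sheaf side. Toda-style wall-crossing (the unrefined version of Theorem~\ref{thm:Todas formula}, established via Joyce--Song theory on the compact Enriques Calabi--Yau threefold and degeneration) expresses the PT series of $K_Y$ as an explicit product over the Vafa--Witten/generalized DT invariants $\VW(r,\beta,n)$ of sheaves supported on fibers. Derived-monodromy and wall-crossing invariance (Theorem~\ref{prop:DT dependence}) reduce these to a short list of model classes --- Hilbert schemes of points for $v=(1,0,-n)$ and one-dimensional sheaves in classes $s+df$ --- which are computed using G\"ottsche's formula and the Nuer--Yoshioka--Sacc\`a structure theory of moduli of sheaves on Enriques surfaces. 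The resulting product is then resummed by the Jacobi-form identity of Proposition~\ref{prop:Jac identity Enriques}; this resummation is where the functions $\omega(r,\beta^2/2)$ and the odd-$k$ ($\Exp^{(2)}$) structure actually come from. The double-cover/KKV picture you describe is a valuable consistency check on the shape of $\omega(r,n)$, but as it stands it cannot be promoted to a proof.
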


The conjecture for the refined PT invariants is as follows:


\begin{thm} 
\label{thm:PT} Conjecture~\ref{conj:VW} holds if and only if we have
\[
\sum_{\beta \geq 0} \sum_{n \in \BZ} \PT_{n,\beta} (-p)^n Q^{\beta}
=
\Exp^{(2)}\left( \sum_{\beta > 0} \sum_{r \in \BZ} 2 \Omega(r,\beta^2/2) p^r Q^{\beta} \right)
\]
where  the $\Omega(r,n) \in \BQ[t^{1/2}, t^{-1/2}]$ are defined by the expansion
\begin{small}
\begin{multline*}
\sum_{n,r} \Omega(r,n) p^r q^n = 
\\
\prod_{\substack{m \geq 1 \\ m \text{ odd}}} \frac{1}{
(1-t^{-\frac{1}{2}} p^{-1} q^m) (1-t^{\frac{1}{2}} p^{-1} q^m) (1-t^{-\frac{1}{2}} p q^m) (1-t^{\frac{1}{2}} p q^m)
(1-q^m)^{2} (1-t^{-1} q^m) (1-t q^m)} 
\prod_{m \geq 1} \frac{1}{(1-q^m)^8}
\end{multline*}
\end{small}
\end{thm}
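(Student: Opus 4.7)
The plan is to lift the wall-crossing argument from the unrefined setting of \cite{Enriques} to the K-theoretic refinement of Liu \cite{Liu}. Both $\PT_{n,\beta}$ and $\VW(v)$ are K-theoretic enumerative invariants of the category of compactly supported coherent sheaves on $X = K_Y$, with the PT invariants arising from stable pairs and the VW invariants from Gieseker-stable sheaves (extended to the semistable case via Joyce pairs). This common ambient category is the natural site for a DT/PT-type wall-crossing formula, and the core step is to establish its K-theoretic refinement.

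The main technical step is to derive a K-theoretic wall-crossing identity in the Hall algebra of compactly supported sheaves on $X$. On the level of generating functions, the result should take the schematic form
$$\sum_{n,\beta} \PT_{n,\beta} (-p)^n Q^\beta = \Exp^{(2)}\!\left( 2 \sum_{\beta>0,\, r \in \BZ} \Omega_{\mathrm{BPS}}(r,\beta^2/2)\, p^r Q^\beta \right),$$
where $\Omega_{\mathrm{BPS}}$ is the K-theoretic BPS-type invariant extracted from $\VW(v)$ by the inverse $\Log^{(2)}$. The restriction to odd multiplicities in $\Exp^{(2)}$ and the overall factor of $2$ reflect the $2$-torsion of $K_Y$ on the Enriques surface, and are inherited unchanged from the unrefined argument; the K-theoretic refinement modifies only the fibrewise denominators, producing the $1/(k[k]_t)$ structure visible in Conjecture~\ref{conj:VW} in place of the unrefined $1/k^2$, consistent with the multiple-cover discussion of \cite{Thomas}.

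Once this wall-crossing identity is in place, the equivalence of the two conjectures reduces to generating function bookkeeping. Assuming Conjecture~\ref{conj:VW}, term-by-term comparison of the two $\Exp^{(2)}$ expansions identifies $\Omega_{\mathrm{BPS}}(r,\beta^2/2)$ with the "$k=1$" contribution to the VW formula, namely $\chivir_{-t}\!\left(\Hilb^{\frac{\beta^2 - 2rn - r^2}{2} + \frac{1}{2}}(Y)\right)$ for the compatible choice of $n$. Substituting G\"ottsche's product formula for the virtual $\chi_y$-genera of $\Hilb^n(Y)$ and rearranging the resulting Euler product into its "odd-$m$" and "all-$m$" parts then yields precisely the product defining $\Omega(r,\beta^2/2)$ in the claim. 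Conversely, applying $\Log^{(2)}$ to the PT generating function reverses this derivation and produces Conjecture~\ref{conj:VW}.

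The main obstacle is proving the K-theoretic wall-crossing identity itself in the exact form required: one must propagate the Nekrasov-Okounkov twist $\widehat{\CO}^{\vir}$ through the Hall-algebra manipulations, and verify that the refined BPS inversion genuinely produces the denominators $1/(k[k]_t)$ rather than some other quantum-integer combination (e.g.\ the $1/[k]_t^2$ appearing in the K3 case). This is the essential new ingredient beyond \cite{Enriques}, and is what selects the "correct" refined multiple-cover formula on the Enriques surface. Once it is available, the remaining identification of Euler products is a direct, if combinatorially delicate, application of G\"ottsche's formula.
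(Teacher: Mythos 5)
Your overall strategy --- lift Toda's wall-crossing to the K-theoretic setting via the work of Kuhn--Liu--Thimm, deduce the equivalence by generating-function manipulation, and obtain the converse from invertibility of the wall-crossing relation --- is the same as the paper's. But the middle of your argument has a genuine gap. The refined wall-crossing formula (Theorem~\ref{thm:Todas formula}) does not output the PT series in the form $\Exp^{(2)}\bigl(2\sum_{r}\Omega(r,\beta^2/2)p^rQ^\beta\bigr)$; it outputs a product over all triples $(r,\beta,n)$ of factors $\exp\bigl((-1)^{r-1}[n+r]_t\,\VW(r,\beta,n)\,Q^\beta p^{\pm n}\bigr)$. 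Passing from that to the stated $\Omega$ requires, after applying $\Log^{(2)}$ and the multiple-cover recursion defining $\vw$: (i) the fact (a consequence of Conjecture~\ref{conj:VW} together with Theorem~\ref{prop:DT dependence}) that $\vw(r,\beta,n)$ depends only on the Mukai square and vanishes for even rank, and (ii) a nontrivial resummation of the double series $\sum_{r\ \mathrm{odd}}\bigl([r]_tq^{r^2/2}+\sum_{n\geq 1}[n+r]_t(p^n+p^{-n})q^{rn+r^2/2}\bigr)$ into a quotient of theta functions --- the Zagier-type identity of Proposition~\ref{prop:Jac identity Enriques}. Your proposal replaces both steps by ``substituting G\"ottsche's formula and rearranging the Euler product into odd-$m$ and all-$m$ parts,'' which cannot work: G\"ottsche's formula is independent of $p$, so the four factors $(1-t^{\pm1/2}p^{\pm1}q^m)$ in the definition of $\Omega$ can only arise from the resummation over the rank $r$ and the Euler characteristic $n$ weighted by the quantum integers $[n+r]_t$.

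Relatedly, your identification of $\Omega(r,\beta^2/2)$ with ``the $k=1$ contribution to the VW formula for the compatible choice of $n$'' conflates two different indices: in the theorem the variable $r$ in $\Omega(r,\beta^2/2)$ is the exponent of $p$, i.e.\ it tracks the holomorphic Euler characteristic of the stable pair, whereas in $\VW(r,\beta,n)$ the letter $r$ is the rank of the sheaf. A single coefficient $\Omega(r,\beta^2/2)$ receives contributions from all ranks, reassembled by the theta identity; there is no term-by-term matching. Your converse direction is essentially right in spirit but should be made precise as in the paper: equation \eqref{toda small} is triangular, so $\Pt_{n,\beta}$ determines $\vw(v)$ uniquely, whence the PT formula admits at most one preimage under wall-crossing, which must then be the one given by Conjecture~\ref{conj:VW}.
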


A remarkable aspect of the formula in Theorem~\ref{thm:PT} is that up to taking the modified plethystic exponential $\Exp^{(2)}$, the PT invariants in class $\beta$ only depend on its square $\beta^2$.

The idea behind Theorem~\ref{thm:PT} is to use a wall-crossing formula of Toda that expresses the PT invariants of a K3 fibration in terms of the generalized DT invariants of sheaves supported on fibers of the fibration \cite{Toda}.
Here we may view the local Enriques surface as a K3 fibration over the orbifold $[\BC/\BZ_2]$.
To make this work also for K-theoretic refinements, requires a K-theoretic lift of the wall-crossing formula. This has been recently achieved by Kuhn-Liu-Thimm and represents the main new geometric input in the paper \cite{KLT}.

The same strategy works also for K3 surfaces and allows one to express the reduced PT invariants of the K3 surface in terms of the Vafa-Witten invariants of the K3 surface. This is sketched in Section~\ref{section:K3}. This gives an alternative (but conceptually equivalent) viewpoint on parts of the arguments of Thomas \cite{ThomasK3}.

\subsection{Plan of the paper}
After introducing some background on power series and Jacobi forms in Section~\ref{sec:background}, we review known results on the geometry of moduli spaces of stable sheaves on the Enriques surface.
In Section~\ref{sec:invariants of Enriques} we discuss Toda's wall-crossing formula and how it leads to the proof of Theorem~\ref{thm:PT}, then we do several basic computations to check the conjecture. We consider the fiber classes in Section~\ref{subsec:fiber classes} and low degree curve classes in Section~\ref{subsec:Computations 3 low degree} using results of G\"ottsche-Shende.
In Section~\ref{subsec:HAE} we derive holomorphic anomaly equations for the refined PT series and show they match holomorphic anomaly equations for the refined GW theory of Brini-Sch\"uler \cite{BS}.
In Section~\ref{section:K3} we discuss the K3 case.

\subsection{Future work}
This is the first paper in a series of two papers on the refined curve counting on Enriques surface.
In the sequel \cite{RefinedEnriques2} we will consider properties of the motivic refinement.

\subsection{Acknowledgements}
The main work on this paper was done during a visit of the author to ICTP
in Trieste during February 2024. I thank Lothar G\"ottsche and Alina Marian for hospitality and productive conversations. I also thank Nikolas Kuhn, Oliver Leigh,  Yannik Sch\"uler, Junliang Shen and Richard Thomas for helpful discussions.
The author was supported by the starting grant 'Correspondences in enumerative geometry: Hilbert schemes, K3 surfaces and modular forms', No 101041491 of the European Research Council.

\section{Background}
\label{sec:background}
\subsection{Conventions} \label{subsec:conventions}
Given a power series $f(x_1,\ldots, x_n)$ with zero constant term the plethystic exponential is defined by
\[ \Exp( f(x_1,\ldots, x_n)) = \exp\left( \sum_{\substack{k \geq 1}} \frac{f(x_1^k, \ldots, x_n^k)}{k} \right). \]
It is characterized by $\Exp(x) = 1/(1-x)$ and $\Exp(f+g) = \Exp(f)\Exp(g)$.
The inverse of $\Exp$ is the plethystic logarithm denoted by $\Log$.

The modified plethystic exponential $\Exp^{(2)}$ is given by
\begin{equation} \Exp^{(2)}(f(x)) = \Exp( f(x) - f(x^2)/2). 
	\label{Exp2} \end{equation}
In particular, 
\[ \Exp^{(2)}(x) = \left( \frac{1+x}{1-x} \right)^{1/2}. \]
We let $\Log^{(2)}$ be the inverse to $\Exp^{(2)}$.
(There does not seem to be a reasonable formula expressing $\Log^{(2)}$ in terms of $\Log$.)


We define the quantum integer $[n]_t$ for $n \geq 0$ by
\[ [n]_t := \frac{ t^{n/2} - t^{-n/2}}{t^{1/2} - t^{-1/2}}. \]
In particular, $[0]_t = 0$, $[1]_t = 1$ and  for $n \geq 2$ we have
\[ [n]_t = t^{-(n-1)/2} \sum_{i=0}^{n-1} t^i = t^{-(n-1)/2} + t^{-(n-1)/2+1} + \ldots + t^{(n-1)/2}. \]
\begin{lemma}
\begin{itemize}
\item[(a)] $[ k \ell ]_{t} =  [ k ]_{t} [ \ell ]_{t^k}$
\item[(b)] $\sum_{\ell \geq 1} [ \ell ]_t p^{\ell} = \sum_{\ell \geq 1} \sum_{j=0}^{\ell-1} t^{\frac{-\ell + 1 + 2j}{2}} p^{\ell} = \frac{p}{(1-t^{1/2} p)(1-t^{-1/2}p)}$
\end{itemize}
\end{lemma}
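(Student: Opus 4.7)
The plan is to prove both parts by direct manipulation of the defining formula $[n]_t = (t^{n/2} - t^{-n/2})/(t^{1/2} - t^{-1/2})$; no geometric input is needed and no step is truly difficult.

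For part (a), I would simply substitute the definition into the right-hand side and observe a telescoping of the numerator/denominator:
\[
[k]_t [\ell]_{t^k} = \frac{t^{k/2} - t^{-k/2}}{t^{1/2} - t^{-1/2}} \cdot \frac{(t^k)^{\ell/2} - (t^k)^{-\ell/2}}{(t^k)^{1/2} - (t^k)^{-1/2}} = \frac{t^{k\ell/2} - t^{-k\ell/2}}{t^{1/2} - t^{-1/2}} = [k\ell]_t,
\]
after the factor $t^{k/2}-t^{-k/2}$ cancels.

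For part (b), the first equality is the finite geometric sum displayed just above the lemma: for $\ell \geq 1$,
\[
[\ell]_t = \frac{t^{\ell/2} - t^{-\ell/2}}{t^{1/2} - t^{-1/2}} = t^{-(\ell-1)/2}\sum_{j=0}^{\ell-1} t^j = \sum_{j=0}^{\ell-1} t^{(-\ell+1+2j)/2},
\]
which gives the middle expression once multiplied by $p^\ell$ and summed over $\ell$. For the closed form, I would avoid reindexing and instead use the left-hand definition directly: split the series and sum two geometric series,
\[
\sum_{\ell \geq 1} [\ell]_t p^\ell = \frac{1}{t^{1/2}-t^{-1/2}} \left( \frac{t^{1/2} p}{1-t^{1/2}p} - \frac{t^{-1/2} p}{1-t^{-1/2}p} \right),
\]
and then combine over a common denominator. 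The numerator simplifies as $t^{1/2}p(1-t^{-1/2}p) - t^{-1/2}p(1-t^{1/2}p) = (t^{1/2}-t^{-1/2})p$, cancelling the prefactor $(t^{1/2}-t^{-1/2})^{-1}$ and yielding $p/[(1-t^{1/2}p)(1-t^{-1/2}p)]$ as claimed.

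As an internal consistency check I would verify convergence in the sense of formal power series in $p$: each coefficient of $p^\ell$ on both sides is a finite Laurent polynomial in $t^{1/2}$ symmetric under $t \mapsto t^{-1}$, which matches the palindromic form $t^{-(\ell-1)/2}+\cdots+t^{(\ell-1)/2}$ of $[\ell]_t$. There is no substantive obstacle here; this lemma is purely a bookkeeping tool for later arguments involving the quantum integer.
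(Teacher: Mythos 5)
Your computations are correct, and the paper itself states this lemma without proof, so your direct manipulation of the definition $[n]_t = (t^{n/2}-t^{-n/2})/(t^{1/2}-t^{-1/2})$ — telescoping for (a), and splitting into two geometric series for (b) — is exactly the routine verification the author leaves to the reader. Nothing is missing.
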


\subsection{Jacobi forms}
Recall the Jacobi theta function
\[ \Theta(p,q) =  (p^{1/2}-p^{-1/2})\prod_{m\geq 1} \frac{(1-pq^m)(1-p^{-1}q^m)}{(1-q^m)^{2}}. \]
Often we drop $q$ from the notation and simply write $\Theta(p)$.
We have the following identities:
\begin{prop} \label{prop:Jac identity K3} We have
\begin{multline*}
\sum_{n \geq 1} [n]_t p^n + \sum_{r \geq 1} \left( [2r]_t q^{r^2} + \sum_{n \geq 1} [n + 2r]_t (p^n + p^{-n}) \right)
\\ =
\frac{p}{(1-t^{1/2} p)(1-t^{-1/2}p)} \prod_{ m \geq 1} \frac{ (1- t q^m)(1-q^m)^2 (1-t^{-1}q^m) }{(1-t^{-1/2} p^{-1} q^m) (1-t^{1/2} p^{-1} q^m) (1-t^{-1/2} p q^m) (1-t^{1/2} p q^m)} \\
= 
\frac{1}{t^{1/2}-t^{-1/2}}
\frac{\Theta(t,q)}{\Theta(t^{1/2}p,q) \Theta(t^{-1/2} p, q)}
\end{multline*}
\end{prop}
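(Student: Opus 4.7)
The proposition contains two equalities. The second (between the middle and rightmost expressions) is a direct rewriting using the definition of $\Theta$: one checks
$$(y^{1/2}-y^{-1/2})\big|_{y=t^{1/2}p}\cdot(y^{1/2}-y^{-1/2})\big|_{y=t^{-1/2}p} \;=\; \frac{(1-t^{1/2}p)(1-t^{-1/2}p)}{p},$$
so that the infinite-product factors in $\Theta(t^{1/2}p,q)\Theta(t^{-1/2}p,q)$ match those in the denominator of the middle expression, while $\Theta(t,q)/(t^{1/2}-t^{-1/2})$ supplies the numerator; the $(1-q^m)^{-4}$ from the product of the two theta functions combines with the $(1-q^m)^{-2}$ from $\Theta(t,q)$ to leave the stated $(1-q^m)^2$ in the numerator.

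The substantive content is the first equality. My plan is to treat both sides as formal power series in $q$ with Laurent-polynomial coefficients in $p$, where the $q^{r^2}$ on the left-hand side is understood to multiply the full parenthesized expression. Two structural observations reduce the work. First, both sides are symmetric under $p \leftrightarrow p^{-1}$: the left-hand side manifestly, and the right-hand side because this swap merely permutes the four $p$-factors in the denominator of the infinite product, while the rational prefactor $p/[(1-t^{1/2}p)(1-t^{-1/2}p)]$ is itself invariant as a rational function. Second, the $q=0$ case reduces to the preceding lemma (part (b)), giving $\sum_{n \geq 1}[n]_t p^n = p/[(1-t^{1/2}p)(1-t^{-1/2}p)]$ on both sides. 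Thus it suffices to verify the $p^n$-coefficient ($n \geq 0$) of the $q^k$-part for each $k \geq 1$.

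For the positive orders, I would proceed either by a direct residue computation in $p$---the $q^k$-coefficient of the right-hand side is a rational function of $p$ with simple poles only at $p = t^{\pm 1/2} q^n$ (with $|n|$ bounded by $k$), and summing the residue series recovers the left-hand side---or by cross-multiplying by $\Theta(t^{1/2}p,q)\Theta(t^{-1/2}p,q)(t^{1/2}-t^{-1/2})$, thereby reducing the claim to the statement that the resulting product is independent of $p$ and equal to $\Theta(t,q)$. In the latter approach the Jacobi triple product is applied to each theta factor and the cancellation of $p$-dependent terms is checked order-by-order in $q$. The main obstacle will be matching the specific form $q^{r^2}\big([2r]_t + \sum_{n \geq 1}[n+2r]_t(p^n + p^{-n})\big)$ to the output of either computation: the squared exponents $q^{r^2}$ must be traced to the quadratic form $k_1^2/2 + k_2^2/2$ arising from the two theta-function expansions, and the $[n+2r]_t$ coefficients extracted by appropriate partial-fraction manipulations. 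This is a classical theta-function bookkeeping exercise requiring no new ideas.
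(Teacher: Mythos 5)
The paper itself does not prove this identity: the stated proof is a one-line citation to Zagier (it is the Fourier expansion of $\theta'(0)\theta(xy)/(\theta(x)\theta(y))$ from Section 3 of Zagier's paper on periods of modular forms and Jacobi theta functions, applied at $x=t^{1/2}p$, $y=t^{1/2}p^{-1}$, so that $xy=t$ and $\Theta(t^{1/2}p^{-1})=-\Theta(t^{-1/2}p)$). A self-contained verification would therefore be a genuinely different route, but your proposal has two concrete problems. The first is your reading of the statement. The missing $q$-power on the term $[n+2r]_t(p^n+p^{-n})$ is indeed a typo, but the intended exponent is $q^{rn+r^2}$ attached to that term, not an overall $q^{r^2}$ multiplying the whole bracket. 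This is forced by the parallel Proposition~\ref{prop:Jac identity Enriques}, where the analogous term is $[n+r]_t(p^n+p^{-n})q^{rn+r^2/2}$, and by the use of the identity in the proof of Proposition~\ref{prop:PT VW for K3}, where the sum appears as $\sum_{r,n}[n+2r]_t\,p^{\pm n}q^{rn+r^2}$. With your reading the identity is false: the coefficient of $p^1q^1$ would be $[3]_t$ on the left (from $r=n=1$), whereas expanding the right-hand side to order $q^1$ gives $[2]_t(t^{1/2}+t^{-1/2})-[1]_t(t^{1/2}+t^{-1/2})^2=0$, which matches the correct reading (no solution of $rn+r^2=1$).

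The second problem is the symmetry reduction. The right-hand side is $p\leftrightarrow p^{-1}$-invariant only as a rational function; the identity is one of formal series in which the prefactor is expanded in positive powers of $p$, and the $q^0$ term $\sum_{n\ge1}[n]_t p^n$ is visibly \emph{not} symmetric, so ``invariance of the rational prefactor'' does not give what you need. (The expansions of $p/[(1-t^{1/2}p)(1-t^{-1/2}p)]$ at $p=0$ and $p=\infty$ differ by the formal delta function $\sum_{n\in\BZ}[n]_tp^n$.) The $q^k$-coefficients of the right-hand side for $k\ge1$ \emph{are} symmetric, but this needs the extra observation that the infinite product specializes to $1$ at $p=t^{\pm1/2}$ (numerator and denominator cancel there), which annihilates the delta-function discrepancy; without that your reduction to coefficients of $p^n$, $n\ge0$, does not go through. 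Finally, the substantive content --- that the $q$-exponents organize into $rn+r^2$ with coefficient $[n+2r]_t$ --- is exactly the step you defer to ``bookkeeping''. It is genuinely short once set up correctly: in Zagier's expansion $\sum_{m,n\ge1}(x^my^n-x^{-m}y^{-n})q^{mn}$ the substitution above gives $x^my^n=t^{(m+n)/2}p^{m-n}$, and writing $m=n+N$ yields $q^{n(n+N)}$ with coefficient $t^{(2n+N)/2}-t^{-(2n+N)/2}=(t^{1/2}-t^{-1/2})[N+2n]_t$; but as written your proposal is a plan built on an incorrect reading of the left-hand side rather than a proof.
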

\begin{proof}
This is an identity of Zagier, see \cite[Sec.3]{Zagier}.
\end{proof}

The proof of the following two propositions can be argued similar to \cite[Prop.2.2]{Enriques}.
Proposition~\ref{prop:Jac identity Enriques2} will be used only in the sequel \cite{RefinedEnriques2}.

\begin{prop} \label{prop:Jac identity Enriques} We have
\begin{align*}
& \sum_{\substack{r \geq 1 \\ r \text{ odd}}} \left( [r]_t q^{r^2/2} + \sum_{n \geq 1} [n+r]_{t} (p^n + p^{-n}) q^{rn+r^2/2} \right) \\
= & \,
q^{1/2} \prod_{\substack{m \geq 1 \\ m \text{ odd}}} \frac{1}{ (1-t^{-1/2} p^{-1} q^m) (1-t^{1/2} p^{-1} q^m) (1-t^{-1/2} p q^m) (1-t^{1/2} p q^m) } \\
& \quad \quad \times 
\prod_{n \geq 1} (1-t^{-1} q^{2n})(1-q^{2n})^{2} (1-t q^{2n}) \\
= & \, \frac{\Theta(t, q^2)}{(t^{1/2} - t^{-1/2})}
\frac{\Theta(p t^{1/2}, q^2) \Theta(p t^{-1/2}, q^2)}{\Theta(p t^{1/2},q) \Theta(p t^{-1/2},q)}
\cdot \frac{\eta(q^2)^8}{\eta(q)^4}
\end{align*}
\end{prop}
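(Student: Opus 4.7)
My plan is to establish the two equalities in Proposition~\ref{prop:Jac identity Enriques} in turn, following the strategy of \cite[Prop.~2.2]{Enriques} with Proposition~\ref{prop:Jac identity K3} as the main input.

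The equality between the middle infinite product and the right-hand theta/eta expression is a direct product computation. Using the defining products of $\Theta$ and $\eta$ together with the splitting
$$\prod_{m \geq 1}(1-xq^m) = \prod_{\substack{m \geq 1\\m \text{ odd}}}(1-xq^m)\,\prod_{m \geq 1}(1-xq^{2m}),$$
the ratio $\Theta(pt^{\pm 1/2},q^2)/\Theta(pt^{\pm 1/2},q)$ collapses to a product over odd $m$ that yields exactly the denominator of the middle expression. The remaining factors $\Theta(t,q^2)/(t^{1/2}-t^{-1/2})$ and $\eta(q^2)^8/\eta(q)^4 = q^{1/2}\prod_{m \geq 1}(1-q^{2m})^8/(1-q^m)^4$ then combine (after cancellation of leftover $(1-q^m)$-powers) to give the numerator $q^{1/2}\prod_{n \geq 1}(1-t^{-1}q^{2n})(1-q^{2n})^2(1-tq^{2n})$.

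For the equality between the LHS sum and the middle product, my first step is to perform the inner geometric sums in $n$ for each fixed odd $r$. Expanding $[n+r]_t = (t^{(n+r)/2}-t^{-(n+r)/2})/(t^{1/2}-t^{-1/2})$ and using the telescoping
$$1 + \sum_{n \geq 1}t^{n/2}(p^n+p^{-n})q^{rn} = \frac{1-tq^{2r}}{(1-t^{1/2}pq^r)(1-t^{1/2}p^{-1}q^r)},$$
the LHS becomes
$$\frac{1}{t^{1/2}-t^{-1/2}}\sum_{\substack{r \geq 1\\r \text{ odd}}}q^{r^2/2}\left(\frac{t^{r/2}(1-tq^{2r})}{\prod_{\pm}(1-t^{1/2}p^{\pm 1}q^r)}-\frac{t^{-r/2}(1-t^{-1}q^{2r})}{\prod_{\pm}(1-t^{-1/2}p^{\pm 1}q^r)}\right).$$
This is a Jacobi--Lerch-style series restricted to odd $r$. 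To evaluate it, I would adapt the proof of Proposition~\ref{prop:Jac identity K3} to the odd-index case. The key manoeuvre is to apply Proposition~\ref{prop:Jac identity K3} with $q$ replaced by $q^{1/2}$ to obtain a closed form for an analogous sum over all $r$, and then subtract the even-$r$ contribution (which matches Proposition~\ref{prop:Jac identity K3} at $q^2$ after the reindexing $r \mapsto 2r$). What remains is the odd-$r$ part, whose closed form, simplified via the Jacobi triple product, agrees with the middle expression.

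The main obstacle is combinatorial bookkeeping: tracking the shifts in the exponent of $q$ and the multiplicities of the $(1-q^m)$ factors that arise from the odd/even separation. The factor $\eta(q^2)^8/\eta(q)^4$ together with the prefactor $q^{1/2}$ on the right arise precisely from this separation, and matching them correctly requires careful, systematic use of the product-splitting identity above.
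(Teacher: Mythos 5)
Your reduction of the second equality (middle product equals the $\Theta$/$\eta$ expression) to the splitting $\prod_{m}(1-xq^m)=\prod_{m\text{ odd}}(1-xq^m)\prod_m(1-xq^{2m})$ is correct, as is the telescoping evaluation of the inner sum over $n$; I checked that $1+\sum_{n\ge 1}t^{n/2}(p^n+p^{-n})q^{rn}=(1-tq^{2r})/\bigl((1-t^{1/2}pq^r)(1-t^{1/2}p^{-1}q^r)\bigr)$ holds. The paper itself gives no details here beyond "argue as in [Enriques, Prop.\ 2.2]", so the real question is whether your route to the first equality closes, and it does not as stated.

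The gap is the claim that substituting $q\mapsto q^{1/2}$ into Proposition~\ref{prop:Jac identity K3} produces the sum over \emph{all} $r\ge 1$ of $[r]_tq^{r^2/2}+\sum_{n\ge1}[n+r]_t(p^n+p^{-n})q^{rn+r^2/2}$, from which you would subtract the even-$r$ part. The even-$r$ identification is fine: setting $r=2r'$ turns $[2r']_tq^{2r'^2}+\sum_n[n+2r']_t(p^n+p^{-n})q^{2r'n+2r'^2}$ into exactly the $q\mapsto q^2$ specialization of Proposition~\ref{prop:Jac identity K3} (this is Proposition~\ref{prop:Jac identity Enriques2}, up to the boundary term $\sum_n[n]_tp^n$, which you must also track). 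But the all-$r$ sum is not obtained from Proposition~\ref{prop:Jac identity K3} by any substitution of $q$ alone: under $q\mapsto q^{1/2}$ the general term of the K3 identity becomes $[n+2r]_t\,q^{(rn+r^2)/2}p^{\pm n}$, and there is no reindexing $s=2r$ (or otherwise) turning this into $[n+s]_t\,q^{sn+s^2/2}p^{\pm n}$ ranging over all $s\ge1$ --- the quantum integer forces $s=2r$ while the exponent forces $s^2/2=r^2/2$, which are incompatible. In fact no single substitution can work: the all-$r$ sum equals (odd part)$+$(even part), i.e.\ the sum of the right-hand sides of Propositions~\ref{prop:Jac identity Enriques} and~\ref{prop:Jac identity Enriques2}, which is a sum of two distinct infinite products with no common product form, whereas every specialization of Proposition~\ref{prop:Jac identity K3} is a single product. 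So the quantity you propose to subtract from does not have the closed form your argument needs.

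A repair is possible but requires working one level up. Writing $a=r$, $b=2n+r$, the odd-$r$ sum is $(t^{1/2}-t^{-1/2})^{-1}$ times the restriction to $a,b$ \emph{both odd} of Zagier's two-index sum $\sum_{a,b\ge1}Q^{ab}(x^ay^b-x^{-a}y^{-b})$ at $Q=q^{1/2}$, $x=t^{1/4}p^{1/2}$, $y=t^{1/4}p^{-1/2}$. The parity restriction must then be imposed by the \emph{four}-term inclusion--exclusion $\sum_{\text{odd,odd}}=Z(Q,x,y)-Z(Q^2,x^2,y)-Z(Q^2,x,y^2)+Z(Q^4,x^2,y^2)$, each term being an instance of Zagier's identity in its full three-variable form (the two cross terms have $xy\ne$ a perfect square of the original, so they are not specializations of Proposition~\ref{prop:Jac identity K3} either), followed by nontrivial theta-function identities to recombine the four meromorphic pieces into the stated product. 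Alternatively, and closer to what "similar to [Enriques, Prop.\ 2.2]" likely means, one can verify that the right-hand side, viewed as a meromorphic Jacobi form in $p$ for $\Gamma_0(2)$, has simple poles exactly at $p=t^{\pm1/2}q^{-m}$ with $m$ odd, compute its residues, and identify the left-hand side as its polar-part expansion. Either way, the two-term subtraction in your proposal has to be replaced.
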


\begin{prop} \label{prop:Jac identity Enriques2} 
\begin{align*}
& \sum_{n \geq 1} [n]_t p^n 
+ \sum_{\substack{r \geq 1 \\ r \textup{ even}}} 
\left( [r]_t q^{r^2/2} + \sum_{n \geq 1} [n+r]_{t} (p^n + p^{-n}) q^{rn+r^2/2} \right) \\
= & \,
\frac{p}{(1-t^{1/2}p) (1-t^{-1/2}p)}
\prod_{m \geq 1} \frac{ (1-t^{-1} q^{2m}) (1-q^{2m})^2 (1-t q^{2m})}{
(1- t^{-\frac{1}{2}} p^{-1} q^{2m}) (1- t^{\frac{1}{2}} p^{-1} q^{2m}) (1-t^{-\frac{1}{2}} p q^{2m}) (1-t^{\frac{1}{2}} p q^{2m})} \\
= & \,
\frac{\Theta(t,q^2)}{(t^{\frac{1}{2}}-t^{-\frac{1}{2}})} \frac{1}{\Theta(t^{\frac{1}{2}} p , q^2) \Theta( t^{-\frac{1}{2}} p, q^2)}
\end{align*}
\end{prop}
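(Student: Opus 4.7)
The plan is to deduce the identity directly from Zagier's formula (Proposition~\ref{prop:Jac identity K3}) by the substitution $q \mapsto q^{2}$. Reindexing the second sum on the left hand side via $r = 2s$ with $s \geq 1$ yields
\[
\sum_{\substack{r \geq 1 \\ r \textup{ even}}} \left( [r]_t q^{r^2/2} + \sum_{n \geq 1} [n+r]_{t} (p^n + p^{-n}) q^{rn+r^2/2} \right)
=
\sum_{s \geq 1} \left( [2s]_t (q^2)^{s^2} + \sum_{n \geq 1} [n+2s]_t (p^n + p^{-n}) (q^2)^{sn+s^2} \right),
\]
and the remaining term $\sum_{n \geq 1}[n]_t p^n$ is independent of $q$. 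Hence the full left hand side of Proposition~\ref{prop:Jac identity Enriques2} coincides with the left hand side of Proposition~\ref{prop:Jac identity K3} evaluated at the pair $(p, q^{2})$.

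Applying Proposition~\ref{prop:Jac identity K3} in its infinite product form with $q \mapsto q^{2}$ then reproduces the middle expression of the statement verbatim, while the same substitution in the theta function form produces
\[
\frac{1}{t^{1/2}-t^{-1/2}} \cdot \frac{\Theta(t,q^2)}{\Theta(t^{1/2} p, q^2)\, \Theta(t^{-1/2} p, q^2)},
\]
which is the third form claimed in the proposition.

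I do not expect any substantive obstacle: the only real content of the argument is identifying the left hand side as Zagier's series in the variables $(p, q^{2})$, and the matching of exponents ($r^2/2 \leftrightarrow 2s^2$ and $rn + r^2/2 \leftrightarrow 2sn + 2s^2$ under $r = 2s$) is a direct check. By contrast, Proposition~\ref{prop:Jac identity Enriques} restricts the sum to \emph{odd} $r$, which cannot be extracted from Zagier's identity by any simple reparametrization of $q$, and so genuinely requires the more involved argument modeled on \cite[Prop.~2.2]{Enriques}.
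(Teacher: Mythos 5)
Your argument is correct: under $r=2s$ the even-$r$ part of the left-hand side has $q$-exponents $r^2/2=2s^2$ and $rn+r^2/2=2sn+2s^2$, so together with the $q$-independent term $\sum_{n\geq 1}[n]_tp^n$ the whole left-hand side is literally Zagier's series from Proposition~\ref{prop:Jac identity K3} evaluated at $(p,q^2)$, and both the product form and the theta form of the right-hand side are obtained from that proposition by the same substitution. This is a genuinely different (and cleaner) route than the one the paper indicates: the text does not write out a proof but refers to an argument ``similar to [Enriques, Prop.~2.2]'', i.e.\ a direct manipulation of theta functions of the type needed for the odd-$r$ identity of Proposition~\ref{prop:Jac identity Enriques}; your reduction makes the even-$r$ case an immediate corollary of the K3 identity already stated in the paper, at the cost of not generalizing to the odd-$r$ case --- a limitation you correctly identify. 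One small caveat: as printed, the inner sum in Proposition~\ref{prop:Jac identity K3} reads $\sum_{n\geq 1}[n+2r]_t(p^n+p^{-n})$ with the factor $q^{rn+r^2}$ missing; this is evidently a typo (the displayed product and theta forms, as well as the use of the identity in the proof of Proposition~\ref{prop:PT VW for K3}, all require that factor), and your proof tacitly uses the corrected statement. It would be worth flagging that explicitly, but it does not affect the validity of the argument.
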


\section{Moduli of stable sheaves on (local) Enriques surfaces}
\label{sec:moduli of sheaves on Enriques}
Let $(Y,H)$ be a polarized Enriques surface
and consider a Chern character
\[ v=(r,\beta,n) \in H^{\ast}(Y,\BZ) 
\]
decomposed according to degree.
Define the Mukai square of $v$ by
\[ v^2 := -\chi(v,v) := -\int_{Y} v^{\vee} \cdot v \cdot \td_Y = \beta^2 - r^2 - 2rn. \]

We say that $v=(r,\beta,n)$ is positive if $r>0$, or $r=0$ and $\beta$ is effective, or $r=\beta=0$ and $n>0$.
Let $M^Y_H(v)$ be the moduli space of $H$-Gieseker semistable sheaves $F$ on $Y$ with {\em Chern character} $\ch(F)=v$.\footnote{We work here with the Chern character, since it is always integral. The Mukai vector $v(F) = \ch(F) \sqrt{\td_Y}$ can be half-integral (e.g. $v(\CO_Y)=(1,0,1/2)$).}
Since there is a unique $2$-torsion line bundle on $Y$ (the canonical bundle),
the moduli space decomposes as 
\[ M_H^Y(v) = M_H^Y(v,L) \sqcup M_H^Y(v, L+K_Y), \]
whre $M_H^Y(v,L)$ parametrizes stable sheaves with determinant $L$.
If the rank is odd, the two components are isomorphic,
and interchanged by tensoring with $\omega_Y$.

For primitive vectors $v$ the moduli spaces $M_H^Y(v)$ are very well-behaved:

\begin{thm}[Nuer, Yoshioka {\cite{N1,N2,Yoshioka}}] \label{thm:Nuer Yoshioka}
Let $Y$ be an unnodal\footnote{An Enriques surface is unnodal if it does not contain smooth rational curves (that is $(-2)$ curves). A generic Enriques surface is unnodal. The moduli space of Enriques surfaces is irreducible.} Enriques surface, let $v=(r,\beta,n) \in H^{\ast}(Y,\BZ)$ be positive and primitive, and let $H$ be a generic polarization.
\begin{enumerate}
\item[(i)] The moduli space $M_H^Y(v,L)$ is non-empty if and only if 
\begin{itemize}
\item[(a)] $2 \nmid \gcd(r,\beta)$ and $v^2 \geq -1$,
\item[(b)] $2 \mid \gcd(r,\beta)$ and $v^2 > 0$,
\item[(c)] $2 \mid \gcd(r,\beta)$ and $v^2=0$ (or $(r,\beta)=(0,0)$), and $2|L+\frac{r}{2} K_Y$.
\end{itemize}
\item[(ii)] If $M_H^Y(v,L)$ is non-empty, than it is irreducible. 
\item[(iii)] $M_H^Y(v,L)$ is of dimension $v^2+1$, unless in case (c) where it is of dimension $2$.
\item[(iv)] $M_H^Y(v,L)$ is smooth with torsion canonical bundle in case (a).
\end{enumerate}
\end{thm}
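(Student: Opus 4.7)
The plan is to establish the four parts in the order (iii), (iv), (i), (ii): dimension and smoothness follow from deformation theory, non-emptiness from a Fourier--Mukai reduction to low-rank cases, and irreducibility from combining this reduction with a deformation argument in the moduli of polarized Enriques surfaces.

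\textbf{Dimension and smoothness.} For every stable $F$ the Zariski tangent space at $[F] \in M_H^Y(v)$ is $\Ext^1(F,F)$ and obstructions lie in $\Ext^2(F,F)$. Since $H^1(\CO_Y)=H^2(\CO_Y)=0$ on an Enriques surface, the trace maps on these Ext groups vanish, so the fixed-determinant moduli $M_H^Y(v,L)$ carries identical tangent--obstruction data. A Hirzebruch--Riemann--Roch computation gives $\chi(F,F)=-v^2$, hence
\[ \dim_{[F]} M_H^Y(v,L) \geq \Hom(F,F) - \chi(F,F) = v^2+1, \]
with equality precisely when $\Ext^2(F,F)=0$. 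By Serre duality $\Ext^2(F,F)^{\ast} \cong \Hom(F, F\otimes K_Y)$, and for stable $F$ this is non-zero only if $F \cong F\otimes K_Y$. Such an isomorphism forces $F$ to descend as a pushforward $\pi_{\ast} G$ from the K3 universal cover $\pi\colon \widetilde Y \to Y$, which in turn forces $2 \mid r$ and $c_1(F) \in 2 H^2(Y,\BZ) + \BZ K_Y$; both conditions fail under hypothesis (a). Thus case (a) yields smoothness and dimension $v^2+1$. In cases (b), (c) descent is allowed, and case (c) is the extremal situation where $v^2 = 0$ forces every stable $F$ to descend, identifying $M_H^Y(v,L)$ (up to finite covers) with the $2$-dimensional moduli of sheaves on $\widetilde Y$ with Chern character $v/2$.

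\textbf{Non-emptiness.} Every Enriques surface carries an elliptic fibration $\pi\colon Y \to \BP^1$. From a (possibly twisted) universal sheaf on a suitable relative compactified Jacobian one constructs a Fourier--Mukai transform $\Phi$ which is an auto-equivalence of $D^b(Y)$ and acts as an isometry of $H^{\ast}(Y,\BZ)$ with its Mukai pairing. Combining $\Phi$ with dualization and line-bundle twists, the isometry group so generated acts transitively on positive primitive Chern characters of a given square $v^2$ within each of the parity classes (a), (b), (c). One may therefore reduce to $r=1$, where $M_H^Y(v,L)$ is a translate of the Hilbert scheme $\Hilb^{(v^2+1)/2}(Y)$. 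Non-emptiness of the Hilbert scheme is classical and matches precisely the conditions in (a)--(c); the additional parity constraint in (c) tracks the Brauer obstruction arising from the multiple fibers of $\pi$.

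\textbf{Irreducibility.} The same Fourier--Mukai reduction also reduces irreducibility to that of $\Hilb^n(Y)$, which is classical. The principal difficulty is that $\Phi$ need not send $H$-Gieseker stable sheaves to $H'$-Gieseker stable sheaves for any obvious $H'$. This is bridged by a wall-crossing argument in Bridgeland stability: one shows $\Phi$ identifies a large-volume chamber with a chamber of stability conditions that equals the Gieseker chamber after crossing finitely many walls, and that the moduli spaces are related across these walls by elementary birational modifications preserving irreducibility. To transport the conclusion between different pairs $(Y,H)$ one uses that the moduli of polarized unnodal Enriques surfaces is irreducible, so a single well-chosen example (for instance $Y$ elliptic with a section and $H$ a generic polarization) suffices. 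The stability-preservation step in the presence of the multiple fibers of $\pi$ is the main technical obstacle and constitutes the heart of the Nuer--Yoshioka argument.
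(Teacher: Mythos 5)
The paper offers no proof of this statement: it is imported verbatim from Nuer and Yoshioka, so your sketch has to be measured against their arguments rather than against anything in the text. Your overall architecture (deformation theory for (iii)--(iv), Fourier--Mukai reduction for (i)--(ii), deformation through the irreducible moduli of Enriques surfaces) does match the shape of those proofs, and your smoothness argument in case (a) --- Serre duality $\Ext^2(F,F)^{*}\cong\Hom(F,F\otimes K_Y)$, nonvanishing forces $F\cong F\otimes K_Y$ for stable $F$, hence $F=\pi_{*}G$ from the covering K3, which is excluded by $2\nmid\gcd(r,\beta)$ --- is exactly the mechanism the paper itself records in Lemma~\ref{lemma:sheaves on X and Y comparision} and the remark after it.

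The genuine gap is in the reduction step. You claim the group generated by Fourier--Mukai transforms, dualization and line-bundle twists lets you ``reduce to $r=1$, where $M_H^Y(v,L)$ is a translate of the Hilbert scheme,'' and that non-emptiness of $\Hilb^k(Y)$ then ``matches precisely the conditions in (a)--(c).'' This cannot work: these autoequivalences act by isometries for the Mukai pairing, so they preserve $v^2$ (and the divisibility of $\pi^{*}v$), while the Hilbert scheme corresponds to $v=(1,0,-k)$ with $v^2=2k-1$ odd. In case (a) with $r$ even and $2\nmid\beta$ one has $v^2$ even, and in cases (b),(c) one has $\pi^{*}v$ divisible by $2$; neither class of vectors can ever reach a Hilbert-scheme vector. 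Your list of base cases is therefore incomplete: one needs at least $(0,\beta,1)$ (relative compactified Jacobians, Sacc\`a) for the even-square part of case (a), and the $2\mid\gcd(r,\beta)$ classes, whose stable members are generically pushforwards from the covering K3 --- which is also the true source of the parity condition $2\mid L+\tfrac{r}{2}K_Y$ in (c), via $\det(\pi_{*}G)=\mathrm{Nm}(\det G)\otimes\omega_Y^{\mathrm{rk}\,G}$, rather than a Brauer obstruction from the multiple fibers. This trichotomy of orbits is exactly what the paper's Theorem~\ref{thm:BNYS} reflects (odd rank birational to $\Hilb$, even rank only to $M_H^Y(0,\beta,1)$), and the strict inequality $v^2>0$ in (b) versus $v^2\geq -1$ in (a) is likewise invisible from the Hilbert-scheme picture. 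The remaining steps of your plan (wall-crossing to repair Gieseker stability under $\Phi$, transport along the moduli of polarized Enriques surfaces) are consistent with the Nuer--Yoshioka strategy, but they would have to be run separately for each of these base cases.
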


\begin{rmk}
Let $\pi: S \to Y$ be the covering K3 surface.
For $v=(r,\beta,n)$ primitive, the condition $2 \nmid \gcd(r,\beta)$ is equivalent to $\pi^{\ast}(v)$ primitive.
\end{rmk}

We also state a result on the birationality type
for the moduli spaces in case (a):

\begin{thm}[Beckmann, Nuer, Yoshioka, Sacca] \label{thm:BNYS}
Let $v=(r,\beta,n)$ be a primitive Mukai vector on an Enriques surface $Y$ satisfying $2 \nmid \gcd(r,\beta)$. Let $H$ be a generic polarization.
\begin{enumerate}
\item[(i)] If $r$ is odd and $v^2 > 0$, then $M_H^Y(v)$ is birationally equivalent to the Hilbert scheme of $k$ points on $Y$, where $k=(v^2+1)/2$.
\item[(ii)] If $r$ is even and $v^2 > 0$, then $M_H^Y(v)$ is birationally equivalent to $M_H^Y(0,\beta,1)$ for any primitive and effective $\beta$ satisfying $v^2=\beta^2$.
Moreover, $M=M_H^Y(v)$ is a projective Calabi-Yau manifold, i.e. $\omega_M = \CO_M$ and $h^{p,0}(M) = 0$ for $p \neq 0, \dim(M)$.
\end{enumerate}
\end{thm}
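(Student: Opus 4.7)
The plan is to reduce each case to a reference moduli space via derived autoequivalences, which preserve the Mukai pairing on $H^{\ast}(Y,\BZ)$ and, under the unnodal and genericity hypotheses, induce birational equivalences of the associated moduli spaces.

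For case (i), where $r$ is odd, I would combine tensoring by line bundles with Fourier-Mukai transforms along universal families. These act on the algebraic Mukai lattice of $Y$ preserving the Mukai pairing and the parity of $\gcd(r,\beta)$. An orbit argument analogous to the K3 case shows that within the sublattice of vectors with odd rank and fixed Mukai square, the orbit of $v$ contains the reference vector $v_0 = (1,0,1-k)$ with $k = (v^2+1)/2$. An autoequivalence $\Phi$ sending $v$ to $v_0$ carries a generic stable sheaf of Chern character $v$ to a stable sheaf of Chern character $v_0$ for a suitably transformed polarization $H'$, yielding a birational equivalence $M_H^Y(v) \sim M_{H'}^Y(v_0)$ by the standard wall-crossing argument. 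Finally $M_{H'}^Y(1,0,1-k,\CO_Y) \cong \Hilb^k(Y)$ directly from the identification of rank-one determinant-trivial torsion-free sheaves with ideal sheaves of length $k$.

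For case (ii), where $r$ is even, the analogous lattice argument applied within the even-rank component produces a derived equivalence reducing $v$ to $v_0 = (0,\beta',1)$ for a primitive effective class $\beta'$ with $(\beta')^2 = v^2$, giving the first claim. The Calabi-Yau property $\omega_M = \CO_M$ and the vanishing $h^{p,0}(M) = 0$ for $p \neq 0, \dim(M)$ are the content of Sacca's work \cite{Sacca}: she analyzes $M_H^Y(0,\beta,1)$ via the support morphism to the linear system $|\beta|$, realizing it on a dense open subset as a relative compactified Jacobian over a family of integral curves, and computes the relevant Hodge numbers. Both the triviality of the canonical bundle and the numbers $h^{p,0}$ are birational invariants of smooth projective varieties with numerically trivial canonical class, so these properties transfer along the birational equivalence to every $M_H^Y(v)$ in the birational class.

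The delicate step is ensuring that the derived autoequivalence $\Phi$ produces an honest birational map of moduli spaces rather than only a derived equivalence: one must verify that the image of a generic stable sheaf is again a sheaf (and not a complex with higher cohomology) and that no walls of strictly semistable objects obstruct this on a dense open subset. Under the unnodal assumption walls arising from $(-2)$-curves disappear, and the genericity of $H$ rules out strictly semistable objects at the reference vector; the detailed constructions and wall-crossing analyses of Beckmann, Nuer, and Yoshioka \cite{Beckmann, N1, N2, Yoshioka} make this precise and are the main technical input.
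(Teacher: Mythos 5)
The paper offers no argument of its own here: it simply cites \cite[Thm.~4.7, Prop.~4.8]{Beckmann}, \cite{NY}, and \cite{Sacca} for the Calabi--Yau part, and your sketch is a fair account of what those references actually do (autoequivalences acting on the Mukai lattice, transport of birational models, Sacca's computation for $M_H^Y(0,\beta,1)$, and birational invariance of $\omega_M$ and $h^{p,0}$), so you are taking essentially the same route. One small correction: with the paper's convention $v^2=\beta^2-r^2-2rn$ for Chern characters, the reference vector for $\Hilb^k(Y)$ is $(1,0,-k)$ (the Chern character of an ideal sheaf of $k$ points), not $(1,0,1-k)$, since $(1,0,-k)^2=2k-1=v^2$.
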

\begin{proof}
See \cite[Thm. 4.7 and Prop.4.8]{Beckmann} or \cite{NY}.
The Calabi-Yau part in (ii) follows from Sacca \cite{Sacca}.
\end{proof}

\begin{rmk}
By Kontsevich's work on motivic integration, the Hodge polynomial of $M_H^Y(v)$ in case (i) of Theorem~\ref{thm:BNYS} is hence equal to the Hodge polynomial of the Hilbert scheme of points and thus determined by G\"ottsche's formula.
In case (ii) it equals to the Hodge polynomial of $M(0,\beta,1)$, which is unknown so far.
\end{rmk}

We consider now the total space $X = K_Y$.
Let us denote with $M_H(v)$ the moduli space of compactly supported  $H$-Gieseker semistable sheaves $F$ on $X=\mathrm{Tot}(\omega_Y)$ with $\ch(p_{\ast} F) = v$, where $p : X \to Y$ is the projection.
We often drop $H$ from the notation.

\begin{lemma} \label{lemma:sheaves on X and Y comparision}
	Let $Y$ be a generic Enriques surface, $v=(r,\beta,n)$ primitive and $H$ generic. If $2 \nmid \gcd(r,\beta)$,
	then the natural inclusion $M^Y_H(v) \subset M_H(v)$ given by pushforward along the zero section is an isomorphism.
\end{lemma}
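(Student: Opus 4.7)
The plan is to use the standard translation between compactly supported coherent sheaves on $X = \mathrm{Tot}(\omega_Y)$ and Higgs pairs on $Y$, and then exploit the fact that $K_Y$ is numerically trivial (being $2$-torsion in $H^2(Y,\BZ)$) to control the Higgs field by sheaf-theoretic means on $Y$.

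First, since $p \colon X \to Y$ is affine, $p_\ast$ identifies compactly supported coherent sheaves $F$ on $X$ with pairs $(G,\phi)$, where $G$ is coherent on $Y$ with $\ch(G)=\ch(p_\ast F)=v$ and $\phi \colon G \to G \otimes \omega_Y$ is a nilpotent $\CO_Y$-linear map (encoding multiplication by the fibre coordinate). Under this correspondence, the inclusion $M^Y_H(v) \hookrightarrow M_H(v)$ is precisely the locus $\{\phi=0\}$, and Gieseker semistability of $F$ with respect to $p^\ast H$ becomes the semistability of the pair: for every $\phi$-invariant subsheaf $G' \subset G$, the reduced Hilbert polynomial of $G'$ is bounded by that of $G$.

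The central step is to show that semistability of $(G,\phi)$ already forces $G$ itself to be Gieseker semistable on $Y$. Since $K_Y$ is numerically trivial, $\ch(G \otimes \omega_Y)=\ch(G)$ in $H^\ast(Y,\BQ)$, so twisting by $\omega_Y$ preserves reduced Hilbert polynomials and hence Harder--Narasimhan filtrations. If $G_1 \subsetneq G$ were the maximal destabilizing subsheaf, then $\phi(G_1) \subset G \otimes \omega_Y$ would be a quotient of the semistable $G_1$, hence of reduced Hilbert polynomial at least $P_{G_1}/\mathrm{rk}(G_1)$; but the HN filtration of $G \otimes \omega_Y$ is $G_\bullet \otimes \omega_Y$, so $\phi(G_1)$ must in fact lie inside $G_1 \otimes \omega_Y$. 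Hence $G_1$ is $\phi$-invariant, contradicting the Higgs-pair semistability of $(G,\phi)$.

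Once $G$ is known to be semistable, primitivity of $v$ combined with genericity of $H$ rules out strict semistability: any Jordan--H\"older decomposition would express $v$ as a sum of integral classes all proportional to $v$ (forced by the genericity of $H$), which is impossible by primitivity. Hence $G$ is stable and $\End(G)=\kk$. To finish, I analyze $\Hom(G, G \otimes \omega_Y)$: since both $G$ and $G \otimes \omega_Y$ are stable with the same reduced Hilbert polynomial, either $\Hom(G, G \otimes \omega_Y)=0$ (when $G \not\cong G \otimes \omega_Y$) and $\phi=0$ at once, or an isomorphism $\alpha \colon G \xrightarrow{\sim} G \otimes \omega_Y$ exists and $\Hom(G, G \otimes \omega_Y)=\kk \cdot \alpha$ consists entirely of isomorphisms --- but an isomorphism $\phi$ is not nilpotent, since then all iterates $\phi^N \colon G \to G \otimes \omega_Y^N$ remain isomorphisms. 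In either case $\phi=0$, so $F = i_\ast G$ is supported on the zero section and lies in $M^Y_H(v)$. The hypothesis $2 \nmid \gcd(r,\beta)$ enters through Theorem~\ref{thm:Nuer Yoshioka}(iv), which guarantees that $M^Y_H(v)$ is smooth of the expected dimension and thereby upgrades the bijection of $\kk$-points to a scheme-theoretic isomorphism.

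The main obstacle is the Harder--Narasimhan step --- verifying cleanly that twisting by a numerically trivial line bundle preserves the HN filtration and that any Higgs field therefore respects it. The remaining ingredients (ruling out strict semistability by primitivity, and using simplicity of stable sheaves to force nilpotent $\phi$ to vanish) are routine.
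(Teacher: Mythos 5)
There is a genuine gap at the very first step. You assert that $p_\ast$ identifies compactly supported sheaves on $X=\mathrm{Tot}(\omega_Y)$ with pairs $(G,\phi)$ in which the Higgs field $\phi\colon G\to G\otimes\omega_Y$ is \emph{nilpotent}. That is false here: since $\omega_Y^{\otimes 2}\cong\CO_Y$, the square $\phi^2$ is an honest endomorphism of $G$, namely multiplication by the global function $h\colon X\to\BC$ induced by $\CO_Y\cong\omega_Y^{-2}\subset\bigoplus_{i\geq 0}\omega_Y^{-i}$. Compact support only forces $h$ to take finitely many values on $\mathrm{Supp}(F)$; the fibre $h^{-1}(0)$ is the doubled zero section, but every fibre $h^{-1}(a)$ with $a\neq 0$ is a copy of the covering K3 surface $\tilde{Y}$, and a stable sheaf supported there has $\phi$ an isomorphism, not nilpotent. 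Ruling out this second possibility is the actual content of the lemma, and it is exactly where the hypotheses you leave essentially unused must enter: if $F$ is supported on $h^{-1}(a)$ with $a\neq 0$ then $v=\pi_\ast v'$ for a class $v'$ on $\tilde{Y}$, which forces $r$ even and (using genericity of $Y$, so that $\Pic(\tilde{Y})=\pi^\ast\Pic(Y)$) also $2\mid\beta$, contradicting $2\nmid\gcd(r,\beta)$. Your attribution of this hypothesis to smoothness of $M^Y_H(v)$ is a misdiagnosis; without it the statement already fails on $\kk$-points.

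Once the set-theoretic support on the zero section is established, your remaining argument does work and is close in spirit to the paper's: the paper simply notes that multiplication by the tautological section gives $\phi\colon p^\ast\omega_Y\otimes F\to F$ with $\phi^2=0$, which cannot be an isomorphism and hence vanishes by stability of $F$ on $X$. Your route through the Harder--Narasimhan filtration (to get stability of $G$ on $Y$ before analysing $\Hom(G,G\otimes\omega_Y)$) is correct but unnecessary --- stability of $F$ as a sheaf on $X$, which is what the moduli space parametrizes, already suffices. So the fix is to prepend the support analysis via the function $h$; the tail of your argument can then stand.
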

\begin{proof}
	We have $K_Y = \Spec( \oplus_{i \geq 0} \omega_Y^{-i})$. The inclusion
	$\oplus_i \CO_Y \cong \oplus \omega_Y^{-2i} \subset \oplus_{i \geq 0} \omega_Y^{-i}$ gives a map $h : K_Y \to Y \times \BC \to \BC$. We have
	\[ h^{-1}(a) = \begin{cases}
		\tilde{Y} & \text{ if } a \neq 0 \\
		2Y & \text{ if } a=0
	\end{cases}
	\]
	where $\tilde{Y} \to Y$ is the covering K3 surface.
	
	If $F \in M_H(v)$, then since $v$ is primitive and $H$ is generic, $F$ is stable, so must be supported on a fiber $h^{-1}(a)$.
	If $a \neq 0$, then $v = \pi_{\ast} v'$ for $v' \in H^{\ast}(\tilde{Y},\BZ)$. In particular, $r$ must be even and since $Y$ is generic, we have $\Pic(\tilde{Y}) = \pi^{\ast} \Pic(Y)$, which shows that $2|\beta$.
	If we assume $r$ is odd or $\beta$ not $2$-divisible, we hence get that $F$ is set-theoretially supported on $h^{-1}(0)=2Y$.
	There is a canononical $s \in H^0(X, p^{\ast} \omega_Y)$ which vanishes precisely at the zero section $Y \subset X$.
	Multiplying by $s$ gives a morphism $p^{\ast}\omega_Y \otimes F \to F$. Since its square is zero, it can not be an isomorphism. So by stability it must vanish, and $F$ is supported on the zero section.
\end{proof}

We can also analyse the situation infinitesimally.
\begin{lemma}
	If $F$ be a coherent sheaf on $Y$, then we have the following long exact sequence
	of $\BC^{\ast}$-equivariant vector spaces:
	\begin{align*}
		0 \to & \Ext^1_Y(F, F) \to \Ext^1_X( \iota_{\ast} F, \iota_{\ast} F) \to \Hom(F, F \otimes \omega_Y) \otimes t \\
		\to & \Ext^2_Y(F,F) \to \Ext^2_X(\iota_{\ast} F, \iota_{\ast} F) \to \Ext^1(F, F \otimes \omega_Y) \otimes t \to 0,
	\end{align*}
where $\BC^{\ast}$ acts on $X$ by scaling the fibers.
\end{lemma}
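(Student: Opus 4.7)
The plan is to apply the derived adjunction between $L\iota^\ast$ and $\iota_\ast$, and to analyze $L\iota^\ast \iota_\ast F$ via the Koszul resolution of the zero section, carried out throughout in the $\BC^\ast$-equivariant derived category.

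First I would note that $\iota : Y \hookrightarrow X$ is a regular closed embedding of codimension one. Since $\BC^\ast$ scales the fibers of $p : X \to Y$, the ideal sheaf $\CI_Y \subset \CO_X$ is locally generated by the fiber coordinate, giving the equivariant identification $\CI_Y \cong p^\ast \omega_Y^{-1} \otimes t^{-1}$ and hence an equivariant normal bundle $N_{Y/X} = \omega_Y \otimes t$. The resulting Koszul resolution
\[
0 \to p^\ast \omega_Y^{-1} \otimes t^{-1} \to \CO_X \to \iota_\ast \CO_Y \to 0
\]
shows, after tensoring with $\iota_\ast F$ and applying the projection formula, that $L\iota^\ast \iota_\ast F$ has only two non-vanishing cohomology sheaves, namely $H^0 = F$ and $H^{-1} = F \otimes \omega_Y^{-1} \otimes t^{-1}$. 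These fit into a distinguished triangle in $D^{\BC^\ast}(Y)$:
\[
F \otimes \omega_Y^{-1} \otimes t^{-1}[1] \to L\iota^\ast \iota_\ast F \to F \xrightarrow{+1}.
\]

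Next I would apply the contravariant functor $R\Hom_Y(-, F)$ and use the equivariant adjunction $L\iota^\ast \dashv \iota_\ast$ (valid since $\iota_\ast$ is exact) to identify the middle term of the resulting triangle with $R\Hom_X(\iota_\ast F, \iota_\ast F)$. The rightmost term becomes
\[
R\Hom_Y(F \otimes \omega_Y^{-1} \otimes t^{-1}[1], F) \;=\; R\Hom_Y(F, F \otimes \omega_Y)[-1] \otimes t.
\]
Taking the long exact sequence of cohomology and using that $Y$ is a smooth projective surface (so $\Ext^i_Y(F, -) = 0$ for $i > 2$) truncates the sequence to the claimed six terms: the leftmost zero appears because $\Hom_X(\iota_\ast F, \iota_\ast F) \to \Ext^{-1}_Y(F, F \otimes \omega_Y) \otimes t = 0$ forces $\Ext^1_Y(F,F) \hookrightarrow \Ext^1_X(\iota_\ast F, \iota_\ast F)$, and the rightmost zero appears because $\Ext^3_Y(F, F) = 0$.

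The main subtlety is the $\BC^\ast$-equivariant bookkeeping: with the paper's convention (where $t$ is the standard representation and $\BC^\ast$ scales the fibers), one checks that the ideal sheaf sits in weight $-1$, so that $N_{Y/X} = \omega_Y \otimes t$ and the twist by $t$ lands on the right-hand side of the sequence. Beyond this verification, the argument is a routine application of derived adjunction together with the Koszul resolution of a smooth divisor, so I do not expect any significant conceptual obstacle.
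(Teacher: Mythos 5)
Your argument is correct and is exactly the standard derivation: the Koszul resolution of the zero section gives the two-term filtration of $L\iota^{\ast}\iota_{\ast}F$ with graded pieces $F$ and $F\otimes\omega_Y^{-1}\otimes t^{-1}[1]$, and applying $R\Hom_Y(-,F)$ together with the adjunction $L\iota^{\ast}\dashv\iota_{\ast}$ yields the six-term sequence, with the equivariant weight on the conormal bundle correctly identified as $t^{-1}$. The paper itself offers no independent proof (it only points to \cite[Sec.4.2]{QuasiK3} for similar results), and your write-up supplies precisely the argument that reference would give.
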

\begin{proof}
	See for example \cite[Sec.4.2]{QuasiK3} for similar results.
\end{proof}

In particular, the inclusion $\Ext^1_Y(F, F) \to \Ext^1_X( \iota_{\ast} F, \iota_{\ast} F)$ can only fail to be an isomorphism if there is a non-zero map $F \to F \otimes \omega$. If $F$ is also stable, this map has to be an isomorphism, in which case $F$ is a pushforward of a sheaf on $\tilde{Y}$.

\section{K-theoretic invariants of the Enriques surface}
\label{sec:invariants of Enriques}
\subsection{Nekrasov-Okounkov twists}
\label{subsec:NO twists}
Following \cite[Sec.2]{Thomas}, let $M$ be a quasi-projective scheme with a $\BC^{\ast}$-action with projective fixed locus $M^T$, and a $T$-equivariant symmetric perfect obstruction theory $E^{\bullet} = [E^{-1} \to E^0] \to \BL_M$.
Let $K^{\mathrm{vir}} = \det(E^{\bullet})$ be the virtual canonical bundle
and let $\CO_M^{\mathrm{vir}}$ be the virtual structure sheaf.

As explained by Nekrasov and Okounkov \cite{NO} the natural $K$-theoretic invariants to consider in this geometry are the equivariant Euler-characteristics $\chi(M, \widehat{\CO_M^{\vir}})$,
where 
\[ \widehat{\CO_M^{\vir}} =  \CO_M^{\vir} \otimes K_{\vir}^{1/2} \]
is a twisted structure sheaf and $K_{\vir}^{1/2}$ is a square root of $K_{\vir}$.
However, the square root does not have to exist in general and may not be unique.
Instead to obtain a unambiguous definition,
one defines the left hand side by localization.
Namely after restriction to the fixed locus,
the virtual canonical bundle 
$K_{\vir}|_{M^T}$ admits a {\em canonical} square root $K_{\vir}^{1/2}|_{M^T}$, see \cite[Prop.2.6]{Thomas}. So one defines $\chi(M, \widehat{\CO^{\vir}})$ by the (virtual) $K$-theoretic localization formula \cite{Thomas}:\footnote{The localization formula in $K$-theory is the following:
	If a torus $T$ acts on a smooth variety $M$ with proper fixed locus $M^T$ with normal bundle $N$, and $E$ is an $T$-equivariant sheaf, then
	\[ \chi(M,E) = \chi\left( M^{T}, \frac{E|_{M^T} }{\bigwedge^{\bullet} N^{\vee}} \right). \]}
\[
 \widehat{\CO_M^{\vir}} := 
  \chi_{t}\left( M^T, \frac{\CO_{M^T}^{\vir}}{\wedge^{\bullet}( N^{\vir} )^{\vee} } \otimes K_{\mathrm{vir}}^{1/2}|_{M^T} \right),
\]
where $N^{\vir}$ is the virtual normal bundle.
By \cite[Prop.2.22]{Thomas},
$\widehat{\CO_M^{\vir}} \in \BQ(t^{1/2})$ is a rational function in $t^{1/2}$ with poles at roots of unity and at zero, but no pole at $t=1$,
invariant under $t \mapsto 1/t$.

\begin{example}
Assume $M$ is a smooth connected projective variety with trivial torus action and an equivariant symmetric perfect obstruction theory.
The obstruction theory can then be written as $E^{\bullet} = [ T_M \otimes t^{-1} \xrightarrow{0} \Omega_M ]$. 
The canonical square root of the virtual canonical bundle is
\[ (K_M^{\vir})^{1/2} = \omega_M \otimes t^{\frac{\dim(M)}{2}}. \]
The virtual structure sheaf becomes
\[ \CO_M^{\vir} = \bigwedge^{\bullet} \Obs^{\vee} = \bigwedge^{\bullet} T_M \otimes t^{-1}, \]
where 
$\Obs = T_M^{\ast} \otimes t$ is the obstruction bundle and
$\wedge E = \sum_{i \geq 0} (-1)^i \wedge^i E$, see \cite[Sec.3.2]{FG}.

The shifted virtual structure sheaf is hence
\[ \widehat{\CO}_M^{\vir} = \CO_M^{\vir} \otimes K_{\vir}^{1/2} = \sum_{i=0}^{\dim(M)} (-1)^i \left( \bigwedge^{\dim(M)-i} \Omega_M \right) \otimes t^{\frac{\dim(M)}{2}-i}. \]
Hence we obtain
\[
\chi(M, \widehat{\CO}_M^{\vir}) = (-1)^{\dim(M)} t^{-\frac{\dim M}{2}} \sum_{j \geq 0} \chi(M, \wedge^j \Omega_M) (-t)^j
= \chivir_{-t}( M ).
\]
\end{example}

\begin{example}
\label{example:VW clasical}
In particular, if $Y$ is an Enriques surface, $v=(r,\beta,n)$ primitive, and $2 \nmid \gcd(r,\beta)$, then
$M^Y(v) \subset M(v)$ is an isomorphism (Lemma~\ref{lemma:sheaves on X and Y comparision}), $M^Y(v)$ is smooth (Theorem~\ref{thm:Nuer Yoshioka}) and hence the Vafa-Witten invariant satisfies:
\[ \VW(v) = \chivir_{-t}(M(v)). \]
\end{example}

\subsection{Two structure theorems} \label{subsec:basic results}
We consider the Nekrasov-Okounkov twisted Vafa-Witten and Pandharipande-Thomas invariants of the local Enriques surface $X=K_Y$,
denoted
$\VW(v)$ for $v \in H^{\ast}(Y,\BZ)$ and
$\PT_{n,\beta}$ for $\beta \in H_2(Y,\BZ)$ respectively.

The unrefined version of these invariants were studied and computed in \cite{Enriques}. A key role was played there by two structure results.
The first is a relationship between PT and VW invariants, the second an independence statement for VW invariants.
The key insight of this section is that both statements lift also to NO-refined invariants. 

We begin with the 
refined version of \cite[Thm 5.14]{Enriques}.

\begin{thm}(Toda's equation) \label{thm:Todas formula}
\begin{align*}
\sum_{\beta \geq 0} \sum_{n \in \BZ} \PT_{n,\beta} (-p)^n Q^{\beta}
=& \prod_{\substack{r \geq 0 \\ \beta > 0 \\ n \geq 0 }} \exp\left( (-1)^{r-1} [n+r]_t \VW(r,\beta,n) Q^{\beta} p^n \right) \\
 \times&  \prod_{\substack{r>0 \\ \beta>0 \\ n>0}} \exp\left( (-1)^{r-1} [n+r]_t \VW(r,\beta,n) Q^{\beta} p^{-n} \right)
\end{align*}
\end{thm}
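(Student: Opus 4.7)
The strategy is to deduce this statement from the K-theoretic wall-crossing formula of Kuhn--Liu--Thimm \cite{KLT} applied to the local Enriques threefold $X = K_Y$, following the blueprint of the unrefined version \cite[Thm~5.14]{Enriques}. First I would recall that unrefined proof: one views $p : X \to Y$ as part of a map $h : X \to [\mathbb{A}^1/\mathbb{Z}_2]$ whose generic fiber is the covering K3 surface $\tilde Y$ and whose central fiber is the doubled zero section $2Y$, so that $X$ becomes a K3 fibration in the orbifold sense. Toda's wall-crossing formula then expresses the PT series of the total space as an exponential of generalized DT invariants of sheaves supported on fibers. By Lemma~\ref{lemma:sheaves on X and Y comparision} and its accompanying infinitesimal analysis, any stable $F$ contributing in classes with $2 \nmid \gcd(r,\beta)$ is set-theoretically supported on the zero section $Y \subset X$, so the fiber-DT contributions collapse to the Vafa--Witten side. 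For classes with $2 \mid \gcd(r,\beta)$ one also controls the contributions from sheaves pushed forward from $\tilde Y$, which for the exponent on the right-hand side combine into the stated range.

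The K-theoretic upgrade proceeds through the same structural steps, replacing each ingredient by its equivariant K-theoretic analogue. The main new input from \cite{KLT} is that in the wall-crossing between a stable pair and a sheaf, the integer pairing $\chi_X(I^{\bullet}, F)$ that controls the Joyce exponential in cohomology gets refined to a character in $t^{1/2}$, yielding the Nekrasov--Okounkov twisted Euler form. A direct calculation using the Koszul resolution of $\iota_* \mathcal{O}_Y$ along the zero section of $K_Y$ and the fact that the scaling torus acts with weight one on the normal fiber shows that this refined pairing, applied to a PT pair of class $(1,0,\beta,n)$ and a $Y$-supported sheaf of class $(r,\beta,n')$, evaluates (after symmetrization by the canonical square root) to the quantum integer $[n+r]_t$. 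The sign $(-1)^{r-1}$ tracks the parity of the rank in the standard PT wall-crossing bookkeeping, exactly as in the unrefined case.

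With these identifications, the master wall-crossing identity of \cite{KLT} takes the shape of the right-hand side, once one organizes the index ranges so that non-negative $n$ are captured by the first factor (including the boundary $r=0$ or $n=0$) and the additional $p^{-n}$ contributions from sheaves of positive rank by the second factor, in direct analogy with the PT/PT$^{\mathrm{op}}$ symmetry present already unrefined. Finally, I would carefully match indices $(\beta,n,r)$ across the three moduli spaces (PT pairs, Joyce-type auxiliary spaces, and the sheaf moduli relevant for $\VW$), verifying that the specialization $t \mapsto 1$ recovers \cite[Thm~5.14]{Enriques}.

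The main obstacle is the coherence of the square roots $K_{\mathrm{vir}}^{1/2}$ across the wall-crossing. The invariant $\widehat{\mathcal{O}}^{\mathrm{vir}}$ depends on a canonical choice of square root of the virtual canonical bundle on the fixed locus, and the K-theoretic Joyce formula in \cite{KLT} produces a natural square root on the moduli of Joyce pairs whose restriction to the PT and sheaf components must agree with the ones used in $\PT_{n,\beta}$ and $\VW(v)$ respectively. Tracking this orientation datum, and in particular ensuring that the symmetrization responsible for producing $[n+r]_t$ (rather than an unsymmetric $t^{n+r}$-type monomial) is performed consistently, is where the genuine work lies; once it is in place, the derivation is essentially formal.
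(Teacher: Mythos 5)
Your proposal follows essentially the same route as the paper: Toda's path of stability conditions on the local Enriques threefold \cite{Toda}, combined with the K-theoretic wall-crossing formalism of Kuhn--Liu--Thimm \cite{KLT} in which the Joyce-theoretic pairing $\chi(E_1,E_2)$ is replaced by the quantum integer $[\chi(E_1,E_2)]_t=[n+r]_t$, giving the stated product with the factors corresponding to the wall-crossing terms. The only (harmless) discrepancy is in your account of the unrefined case: the proof of \cite[Thm~5.14]{Enriques} actually went through the compact Enriques Calabi--Yau threefold and a degeneration argument, a strategy the paper explicitly notes cannot be refined K-theoretically, whereas your refined argument, like the paper's, works directly on the local geometry via \cite{KLT}.
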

\begin{proof}
In \cite{Toda} Toda constructed a path of certain stability conditions, so that the stable pbjects at the start point are the stable pair invariants, and the stable objects at the end are a certain line bundles (which are easy to count). Along the path, the moduli space changes along walls which arise from two-dimensional compactly supported sheaves on the local Enriques surface.
As soon as one has a wall-crossing formula in the style of Joyce theory \cite{Joyce}, this construction gives rise to the product expansion above, the factors precisely corresponding to the wall-crossing terms.

For unrefined invariants the wall-crossing formula was obtained by considering the compact geometry of the Enriques Calabi-Yau threefold
and performing the wall-crossing there, see \cite{Enriques} (using the work of Joyce-Song \cite{JS}). These implied the results for the invariants of the local Enriques surface by a degeneration argument. For K-theoretic invariants this strategy can not succeed.
Instead, we use here the recent work of Kuhn-Liu-Thimm \cite{KLT} which exactly prove such a K-theoretic wall-crossing formalism in instances which incclude the local Enriques. It works parallel to the unrefined case, except that the  wall-crossing factor $\chi(E_1,E_2)$ between two sheaves has to be replaced by the corresponding quantum integer $[ \chi(E_1, E_2) ]_t$. The outcome is the formula above.\footnote{I thank Nikolas Kuhn for discussions on this point. We also refer to Thomas \cite{ThomasK3} for a further discussion of refined wall-crossing formulas.}
\end{proof}

In an identical fashion, we obtain the refined version of \cite[Thm 5.4]{Enriques} (we use a different but equivalentl convention of the notion of divisibility, that is easier to use).

\begin{thm} \label{prop:DT dependence}
Let $v=(r,\beta,n) \in H^{\ast}(Y,\BZ)$.
\begin{enumerate}
\item[(i)] The invariant $\VW(v)$ does not depend on the choice of polarization used to define it.
\item[(ii)] The invariant $\VW(v)$ depends upon $v$ only through
\begin{itemize}
\item the Mukai square $v^2 = \beta^2 - 2rn - r^2$
\item the divisibility $\gcd(r,\beta,n)$
\item the type, defined as
\[
\begin{cases}
\text{ even } & \text{ if } \frac{r}{\gcd(r,\beta,2n)},\frac{2n}{\gcd(r,\beta,2n)} \text{ are both even } \\
\text{ odd } & \text{ otherwise }.
\end{cases}
\]
\end{itemize}
In other words, $\VW(v) = \VW(v')$ if $v,v'$ have the same square, divisibility and type.
\end{enumerate}
\end{thm}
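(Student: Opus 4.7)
The plan is to mirror the proof of \cite[Thm.~5.4]{Enriques} essentially verbatim, replacing the classical Joyce--Song wall-crossing formula for refined DT invariants by its $K$-theoretic lift due to Kuhn--Liu--Thimm \cite{KLT}. The combinatorics is identical, the sole change being that each Euler pairing $\chi(E_1,E_2)$ appearing as a wall-crossing coefficient is replaced by the quantum integer $[\chi(E_1,E_2)]_t$. Since the quantum integers satisfy the symmetry $[n]_t=[-n]_t$ and the formal combinatorial identities required for the Joyce formalism to close up, the algebraic consequences carry over to the refined setting.

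For part (i), I would express $\VW(v)$ as a Joyce-style expression in the $K$-theoretic Hall algebra, summed over Harder--Narasimhan strata with weights involving the virtual structure sheaves and canonical square roots of the virtual canonical bundles on the strata. Changing the polarization from $H$ to $H'$ only changes the stability condition at walls where strictly semistable objects appear. Across each such wall the \cite{KLT} formula rewrites $\VW_H(v)$ as the same Joyce-style expression re-assembled from $H'$-Harder--Narasimhan filtrations, which telescopes to $\VW_{H'}(v)$.

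For part (ii), the strategy is to exhibit, for any two Mukai vectors $v,v'$ sharing the same square, divisibility, and type, an autoequivalence of $D^b(X)$ inducing a bijection between the relevant moduli spaces $M_H(v)$ and $M_{H'}(v')$ compatible with both the symmetric perfect obstruction theory and the canonical choice of $K_{\vir}^{1/2}$ on the fixed locus. The relevant operations on $H^{\ast}(Y,\BZ)$ are: tensoring by line bundles pulled back from $Y$, tensoring by $\omega_Y$ (which is $2$-torsion, and whose effect on the determinant is what the type invariant tracks), and derived dualisation, which exchanges $r$ and $-n$ in $(r,\beta,n)$. A lattice-theoretic check shows that the orbit of $v$ under the group generated by these operations consists precisely of the Mukai vectors with the same triple $(v^2,\gcd(r,\beta,n),\text{type})$, and each such equivalence preserves $\widehat{\CO}^{\vir}$ by the functoriality of the $K$-theoretic localisation formula.

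The main obstacle will be the two compatibility statements hidden in the above outline: that \cite{KLT} genuinely covers the non-compact setting of $X=K_Y$ with its scaling $\BC^{\ast}$-action, including the strictly semistable loci that appear at walls; and that each autoequivalence used in (ii) transports the canonical square root $K_{\vir}^{1/2}|_{M^T}$ to the canonical square root on the target rather than to its opposite. Both points are routine in the unrefined setting, and their validity in the refined setting is what permits the proof to proceed ``identically'' to \cite[Thm.~5.4]{Enriques}; they require, however, careful bookkeeping so that no sign ambiguity enters $\widehat{\CO}^{\vir}$.
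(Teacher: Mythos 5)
Your overall strategy coincides with the paper's: transport the unrefined argument of \cite[Thm.~5.4]{Enriques} through the $K$-theoretic wall-crossing of \cite{KLT}, and reduce part (ii) to the transitivity of a group action on the set of Mukai vectors with fixed square, divisibility and type. Part (i) is fine and matches the paper. The gap is in part (ii): the group you propose --- generated by tensoring with line bundles pulled back from $Y$, tensoring with $\omega_Y$, and derived dualisation --- is too small. Every one of these generators preserves the rank up to sign (line bundle twists fix $r$; dualisation sends $(r,\beta,n)$ to $(r,-\beta,n)$, and in particular does \emph{not} exchange $r$ and $-n$ as you claim), so the orbit of $v$ under your group lies entirely in rank $\pm r$. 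The theorem, however, must identify vectors of different ranks with the same invariants --- e.g.\ $(1,0,-n)$ with rank-$0$ or rank-$3$ classes of the same square, divisibility and type --- and the paper's later computations depend on exactly this (reducing $\vw$ of odd-square classes to the Hilbert scheme case $(1,0,-n)$, and even-square classes to $(0,\beta,1)$). Your ``lattice-theoretic check'' therefore cannot close up.

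The paper instead invokes the full derived monodromy group acting on $H^{\ast}(Y,\BZ)$, which contains reflection-type transforms (spherical-twist/Fourier--Mukai equivalences) that mix the rank with the Euler characteristic; combined with line bundle twists these generate enough of the orthogonal group of the Mukai lattice to run a Euclidean-algorithm argument and prove that square, divisibility and type are complete invariants of the orbit. To repair your proof you would need to add such a rank-changing generator and then redo the transitivity check; the compatibility of that extra autoequivalence with $\widehat{\CO}^{\vir}$ and the canonical square root $K_{\vir}^{1/2}|_{M^T}$ is precisely the kind of bookkeeping you flag at the end, but for a generator you have not included.
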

\begin{proof}
If two vectors $v,v'$ have the same square, type and divisiblity,
then they are in the same orbit under the action of the derived monodromy group on $H^{\ast}(Y,\BZ)$. Hence by wall-crossing, $\VW(v) = \VW(v')$, see \cite[Proof of Thm 5.4]{Enriques}.
\end{proof}

To get an idea how to think of the different type of vectors that can appear here, we can look at primitive vectors $v=(r,\beta,n)$. There are three different types of them
(below we let $\pi : S \to Y$ be the covering K3 and let $s,f \in H^2(Y,\BZ)$ be effective classes with $s \cdot f = 1$)
\begin{enumerate}
\item[(i)] \underline{$v$ has odd type and $v^2$ is odd.} \\
Since $v^2$ is odd, one has rank $r$ odd.
This implies that $\pi^{\ast}(v)$ is primitive (since if $\pi^{\ast}(v)$ is imprimitive, then it has divisibility $2$).
A prototypical example in this orbit is
$v=(1,0,-n)$ which is the case of the Hilbert scheme of points. 
\item[(ii)] \underline{$v$ has even type and $v^2$ is even.} \\
One has that $\pi^{\ast}(v)$ is primitive, $r$ is even and $\beta$ is not divisible by $2$, see \cite[Rmk5.8]{Enriques}.
The prototypical example is $(0,\beta,0)$ for $\beta$ a primitive class,
or $(0,\beta,1)$ where $2 \nmid \beta$.
\item[(iii)] \underline{$v$ has odd type and $v^2$ is even.}\\
Here the rank is even, so we must have that $\gcd(r,\beta,2n)$ is divisibile by $2$, hence that $\beta$ is divisible by $2$.
Moreover, $v^2 \equiv 0(8)$ (since if $r=2r'$, then $v^2 \equiv -4 r' (r'+n)$ mod 8, and $r',r'+n$ can not be both odd, since $v$ is primitive).
Moreover, $\pi^{\ast}v$ is divisible by $2$ in $H^{\ast}(X,\BZ)$.
Examples are $v=(0,0,1)$ (of square 0), or $v=(2,0,2 \ell + 1)$ of square $-8-8 \ell$, or $v=(0,2 \beta,1)$ for any effective $\beta$.
\end{enumerate}

Note that the cases (ii) and (iii) can have the same squares, but are of different type, so a priori the DT invariant even for primitive $v$ does not only depend on the square.

\begin{cor} \label{cor:vanishing}
The invariant $\VW(v)$ vanishes whenever $(v/\mathrm{div}(v))^2 < -1$.
\end{cor}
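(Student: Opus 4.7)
My strategy is to show that, for a generic polarization, the moduli stack of semistable compactly supported sheaves on $X = K_Y$ with Chern character $v$ is empty, from which $\VW(v) = 0$ follows since every contribution to the Joyce/Kuhn--Liu--Thimm K-theoretic wall-crossing expression is trivially zero. By Theorem~\ref{prop:DT dependence}(i), polarization-independence of $\VW(v)$ lets me choose any generic $H$ of my liking. Writing $v = d v'$ with $d = \mathrm{div}(v)$ and $v'$ primitive, the hypothesis $(v')^2 < -1$ is equivalent, since $(v')^2 \in \BZ$, to $(v')^2 \leq -2$.

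The central calculation is that for every integer $a \geq 1$ we have $(a v')^2 = a^2 (v')^2 \leq -2 a^2 \leq -2$. This strict negativity violates every non-emptiness condition in the Nuer--Yoshioka Theorem~\ref{thm:Nuer Yoshioka}: case (a) requires $(a v')^2 \geq -1$, while cases (b) and (c) require $(a v')^2 \geq 0$. Hence $M_H^Y(a v') = \emptyset$ on the Enriques surface for all $a \geq 1$ and generic $H$. I would then argue $M_H^Y(v) = \emptyset$ as well: for generic $H$, the reduced Hilbert polynomial of $v$ is only shared by classes proportional to $v$, so the Jordan--Hölder factors of any Gieseker-semistable sheaf with Chern character $v = d v'$ must be stable with Chern characters of the form $a_i v'$ with positive integers $a_i$ summing to $d$; the emptiness above forbids any such stable factor.

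The final step is to transfer this emptiness to $X = K_Y$ by repeating the fiber analysis of Lemma~\ref{lemma:sheaves on X and Y comparision} for each Jordan--Hölder factor of a hypothetical semistable sheaf on $X$: each stable factor is set-theoretically supported either on the zero section $Y$ (reducing directly to the emptiness above) or on the K3 cover $\tilde Y$ (forcing its class to lie in the image of $\pi_\ast$, after which one descends to a moduli problem on $\tilde Y$ with strictly negative Mukai square, again empty by the K3 analogue of Theorem~\ref{thm:Nuer Yoshioka}). The main obstacle I expect is making precise the final implication ``empty moduli stacks of semistable sheaves for every class $a v'$ with $1 \leq a \leq d$ $\Rightarrow$ vanishing of the K-theoretic wall-crossing invariant $\VW(v)$.'' This is a formal consequence of the Joyce/Kuhn--Liu--Thimm formalism \cite{KLT} since each constituent of the refined wall-crossing expression for $\VW(v)$ is an integral over a Joyce-pair moduli built from these (empty) semistable stacks, but it is worth checking carefully in the refined setting, perhaps by using Theorem~\ref{prop:DT dependence}(ii) to pre-select a representative $w$ of the orbit of $v$ for which the same emptiness is visibly preserved after all Joyce pair enhancements.
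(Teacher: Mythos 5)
Your route is genuinely different from the paper's. The paper does not analyse the moduli space for the given $v$ at all: it invokes Theorem~\ref{prop:DT dependence}(ii) to move $w=v/\mathrm{div}(v)$ into one of the three explicit prototype orbits listed after that theorem --- $(1,0,k)$ with $k>0$, or $(0,s+df,0)$, resp.\ $(0,2(s+df),1)$, with $d<0$ --- and then observes that the semistable locus for $m$ times the prototype is empty for elementary reasons (no sheaves with $c_1=0$ and $\ch_2>0$; a non-effective support class). You instead prove emptiness in situ for $v$ itself, via Nuer--Yoshioka, a Jordan--H\"older reduction, and the fibre analysis of Lemma~\ref{lemma:sheaves on X and Y comparision}. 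Both arguments bottom out in ``no semistable objects $\Rightarrow$ $\VW(v)=0$'' within the Joyce/Kuhn--Liu--Thimm formalism, so the overall strategy is sound; the paper's reduction to prototypes makes the emptiness check essentially trivial, while yours keeps the argument intrinsic to $v$ at the cost of more moduli-theoretic input.

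Two steps are not justified as written. First, Theorem~\ref{thm:Nuer Yoshioka} is stated only for \emph{primitive} positive classes, so it says nothing about $M_H^Y(av')$ for $a\ge 2$, and you cannot conclude its emptiness from that theorem. What you actually need is weaker --- that no \emph{stable} sheaf $F$ with $\ch(F)=av'$ exists --- and this has to be supplied separately: for stable $F$ one has $\dim\Hom(F,F)=1$ and $\dim\Ext^2_Y(F,F)=\dim\Hom(F,F\otimes\omega_Y)\le 1$, so $\dim\Ext^1_Y(F,F)=1+\dim\Ext^2_Y(F,F)+(av')^2\ge 0$ forces $(av')^2\ge -2$, contradicting $(av')^2\le -2a^2\le -8$ for $a\ge 2$; the case $a=1$ is then genuinely covered by Nuer--Yoshioka since $v'$ is primitive. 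Second, the descent for Jordan--H\"older factors supported on a K3-cover fibre is only asserted: one must check that $a_iv'=\pi_*\ch(G)$ together with $v_{\tilde Y}(G)^2\ge -2$ for a stable $G$ on $\tilde Y$ is incompatible with $(a_iv')^2\le -2$, which follows from $(\pi_*u)^2=-\chi(G,G)-\chi(\sigma^*G,G)=2\,v_{\tilde Y}(G)^2\ge -4$ for generic $Y$ (where $\sigma^*$ fixes $v_{\tilde Y}(G)$) combined with the evenness of the Mukai square on a K3 --- true, but a computation you have not made. With these two repairs your argument goes through; as it stands, the appeal to Nuer--Yoshioka for imprimitive classes is a genuine gap.
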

\begin{proof}
Let $w=v/\mathrm{div}(v)$. Then $w$ is primitive. Assume $w^2<-1$.
If it is of type (i) above, then $w$ is equivalent to $(1,0,k)$ for some $k>0$, and so
$\DT(v)=\DT(m(1,0,k)) = 0$, since there are no semi-stable sheaves in that case.\footnote{Why exactly?}
If it is of type (ii) then $\DT(v)=\DT(m(0,s+df,0))$ where $d<0$. Since $s+df$ is not effective in this case, there are no semi-stable sheaves and the DT invariant vanishes.
Finally, for type (iii) $v$ is equivalen to $(0,2(s+df),1)$ for $d<0$, so again there are no semi-stable sheaves and the invariant vanishes.
\end{proof}

\subsection{Numerical consequences}
Our next goal is to deduce Theorem~\ref{thm:PT} from the above.
Define the invariants $\vw(r,\beta,n) \in \BQ[t^{1/2}, t^{-1/2}]$ recursively by
\[ \VW(r,\beta,n) = \sum_{\substack{k | (r,\beta,n) \\ k \text{ odd}}} \frac{1}{k [k]_t} \vw\left( r/k, \beta/k, n/k \right)|_{t \mapsto t^k}. \]
Moreover, define $\Pt_{n,\beta} \in \BQ[t^{1/2}, t^{-1/2}]$ for $\beta > 0$ by the expansion
\[ 
\Log^{(2)}\left( \sum_{n,\beta} \PT_{n,\beta} (-p)^n Q^{\beta} \right)
= \sum_{n,\beta} \Pt_{n,\beta} (-p)^n Q^{\beta}, \]
where the modified plethystic logarithm $\Log^{(2)}$ was defined in Section~\ref{subsec:conventions}.

Applying $\Log^{(2)}$ to Theorem~\ref{thm:Todas formula} yields for all $\beta > 0$:
\begin{equation} \label{toda small}
\begin{aligned}
\sum_{n} (-p)^n \Pt_{n,\beta}
 = & \sum_{r,n \geq 0} (-1)^{r-1} \dt(r,\beta,n) [n+r]_t p^n \\
 +&  \sum_{r,n > 0} (-1)^{r-1} \dt(r,\beta,n) [n+r]_t p^{-n}.
 \end{aligned}
\end{equation}
where
we used the identity $[k n]_t/[k]_t = [n]_{t^k}$.

\begin{proof}[Proof of Theorem~\ref{thm:PT}]
If Conjecture VW holds, then a short computation gives
\[
\vw(r,\beta,n) = \left[ 2 \prod_{m \geq 1} \frac{1}{(1-t^{-1} q^m) (1-q^m)^{10} (1-t q^m)} \right]_{q^{\beta^2/2 - rn - r^2/2+1/2}}. \]
In particular, $\vw(v)$ only depends only on the Mukai square $v^2=\beta^2-2rn -r^2$.
Moreover, $\vw(r,\beta,n) =0$ whenever  $r$ is even. Write
$\vw(v) = a(v^2/2+1/2)$.
Our assumption then gives
\begin{align*} 
\sum_n q^n a(n) 
& = 2 \prod_{m \geq 1} \frac{1}{(1-t^{-1} q^m) (1-q^m)^{10} (1-t q^m)} \\
& = \frac{2 (t^{1/2} - t^{-1/2}) q^{1/2}}{ \Theta(t,q) \eta(q)^{12}}.
\end{align*}

Since $\dt(v)$ depends only on $v^2$, by \eqref{toda small} also $\Pt_{n,\beta}$ depends on $\beta$ only through $\beta^2$. Hence we need to compute $\PT_{n,\beta}$ only in the case $\beta=\beta_d$ for some curve class $\beta_d$ with $\beta_d^2=2d$.
This is easily done:
\begin{align*}
& \sum_{n} (-p)^n q^d \Pt_{n,\beta_d} \\
\overset{\eqref{toda small}}{=} & 
\sum_{r,d,n} (-1)^{r-1} \dt(r,\beta_d,n) q^d p^n + \sum_{r,n>0, d} (-1)^{r-1}  \dt(r,\beta_d,n) q^d p^{-n} \\
= & \left( \sum_{n} a(n) q^n \right)
q^{-1/2}
\sum_{\substack{r \geq 1 \\ r \text{ odd}}} \left( [r]_t q^{r^2/2} + \sum_{n \geq 1} [n+r]_{t} (p^n + p^{-n}) q^{rn+r^2/2} \right) \\
\overset{\text{Prop.\ref{prop:Jac identity Enriques}}}{=} & 
\left( \sum_{n} a(n) q^n \right)
q^{-1/2} \cdot 
\, \frac{\Theta(t, q^2)}{(t^{1/2} - t^{-1/2})}
\frac{\Theta(p t^{1/2}, q^2) \Theta(p t^{-1/2}, q^2) \eta(q^2)^8}{\Theta(p t^{1/2},q) \Theta(p t^{-1/2},q) \eta(q)^4} \\
= & 
\frac{2 \Theta(t, q^2)}{\Theta(t,q) \eta^{12}(q)} \cdot
\frac{\Theta(p t^{1/2}, q^2) \Theta(p t^{-1/2}, q^2) \eta(q^2)^8}{\Theta(p t^{1/2},q) \Theta(p t^{-1/2},q) \eta(q)^4}.
\end{align*}
This gives the formula for the PT invariants in Theorem~\ref{thm:PT}.

Conversely,
the arguments 
of part (ii) of the proof of \cite[Proposition 5.16]{Enriques} shows that
\eqref{toda small} is in fact invertible: knowing $\Pt_{n,\beta}$ determines $\vw(v)$ uniquely, so given $\Pt_{n,\beta}$ there is at most one solution to \eqref{toda small}, which then must be given by Conjecture~\ref{conj:VW}.
\end{proof}

\subsection{Motivation for Conjecture~\ref{conj:VW}}
Let $Y \to \p^1$ be an elliptic fibration with half-fiber $f$ and $2$-section $s$. We have that $s^2=f^2=0$ and $s \cdot f=1$, and since we assume $Y$ generic, we can assume $s,f$ are classes of smooth rigid elliptic curves.

We can characterize Conjecture~\ref{conj:VW} as follows:
\begin{lemma} \label{lemma:equivalent conj}
Conjecture~\ref{conj:VW} is equivalent to the following two statements:
\begin{enumerate}
\item[(i)] $\vw(v)$ for $v=(r,\beta,n)$ only depends on the Mukai square
$v^2 = \beta^2 - 2rn - r^2$,
\item[(ii)] $\chi_{-t}(M^Y(0,s+df,0))=0$ for all $d \geq 0$.
\end{enumerate}
\end{lemma}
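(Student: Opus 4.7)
The strategy is to reduce Conjecture~\ref{conj:VW} to a single identity in $\vw(v)$ and then verify it on one primitive representative per Mukai square class. The recursive definition expresses $\VW(v) = \sum_{k\mid v,\ k\text{ odd}} \tfrac{1}{k[k]_t}\,\vw(v/k)|_{t\mapsto t^k}$, a triangular sum over odd divisors with $k=1$ as leading term. Conjecture~\ref{conj:VW} asserts the identical relation with $\alpha(v) := 2\,\chivir_{-t}\bigl(\Hilb^{v^2/2+1/2}(Y)\bigr)$ in place of $\vw(v/k)$. Since this odd-divisor inversion is uniquely invertible, Conjecture~\ref{conj:VW} is equivalent to
\begin{equation*}
\vw(v) \;=\; 2\,\chivir_{-t}\!\bigl(\Hilb^{v^2/2+1/2}(Y)\bigr) \qquad \text{for all effective } v. \tag{$\ast$}
\end{equation*}

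The forward direction is then immediate: the right-hand side of $(\ast)$ depends only on $v^2$, giving (i); and specializing to $v=(0,s+df,0)$ and using Lemma~\ref{lemma:sheaves on X and Y comparision} together with Example~\ref{example:VW clasical} produces $\chivir_{-t}(M^Y(0,s+df,0)) = 2\,\chivir_{-t}(\Hilb^{d+1/2}(Y)) = 0$, which is equivalent to (ii) since $\dim M^Y(0,s+df,0) = 2d+1$ is odd by Theorem~\ref{thm:BNYS}(ii).

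For the converse, assume (i) and (ii). By (i), $\vw$ is determined by $v^2$ alone, so it suffices to verify $(\ast)$ on one primitive representative per Mukai square. I would split into three cases. For odd $v^2 = 2n-1$, take $v=(1,0,-n)$: this is primitive of odd rank, so Lemma~\ref{lemma:sheaves on X and Y comparision} and Example~\ref{example:VW clasical} yield $\vw(v)=\VW(v)=\chivir_{-t}(M^Y(v))$, and the splitting by determinant $M^Y(v)\cong \Hilb^n(Y)\sqcup \Hilb^n(Y)$ gives exactly $2\,\chivir_{-t}(\Hilb^n(Y))$ --- verifying $(\ast)$ with no appeal to (i) or (ii). For even $v^2=2d\geq 0$, take $v=(0,s+df,0)$: the same tools express $\vw(v)$ as $\chivir_{-t}(M^Y(v))$, which vanishes by (ii), matching the vanishing of the right-hand side of $(\ast)$ since $d+\tfrac12 \notin \BZ$. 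For $v^2\leq -2$, any primitive $v$ of that Mukai square has $\vw(v)=\VW(v)=0$ by Corollary~\ref{cor:vanishing}, and the right-hand side of $(\ast)$ vanishes as well.

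The main conceptual point --- and the place where the equivalence is most delicate --- is that the right-hand side of $(\ast)$ depends \emph{only} on $v^2$, whereas $\vw(v)$ a priori depends additionally on the divisibility and type (Theorem~\ref{prop:DT dependence}). Condition (i) is exactly what closes this gap, allowing reduction to the two families of primitive representatives $(1,0,-n)$ and $(0,s+df,0)$; the nontrivial geometric input in the even-Mukai-square case is then precisely (ii), while the odd-Mukai-square case is automatic from the rank-one moduli computation.
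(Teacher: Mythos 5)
Your proof is correct and follows essentially the same route as the paper's: both reduce Conjecture~\ref{conj:VW} to the single identity $\vw(v)=2\,\chivir_{-t}(\Hilb^{v^2/2+1/2}(Y))$ via invertibility of the odd-divisor recursion, then use (i) to reduce to the representatives $(1,0,-n)$ (Hilbert schemes, handling odd $v^2$) and $(0,s+df,0)$ (whose smooth odd-dimensional moduli space makes the even-$v^2$ case equivalent to (ii)). Your write-up is merely more explicit than the paper's one-paragraph argument, spelling out the forward direction and the negative-square cases; the only nitpick is that the dimension $2d+1$ comes from Theorem~\ref{thm:Nuer Yoshioka}(iii) rather than Theorem~\ref{thm:BNYS}(ii).
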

\begin{proof}
If (i) holds, then $\vw(v)$ for $v^2$ odd is determined by the case of Hilbert scheme of points where $v=(1,0,-n)$, so is given by G\"ottsche's formula. If $v^2$ is even, then $\vw(w)$ is determined by the invariant of the moduli spaces $M(0,\beta_d,0)$, where $\beta_d=s+df$.
By Lemma~\ref{lemma:sheaves on X and Y comparision} $M(0,\beta_d,0) = M^Y(0,\beta_d,0)$, and by Theorem~\ref{thm:Nuer Yoshioka} these are smooth of dimension $2d+1$, so their Vafa-Witten invariants are non-zero multiples of their $\chi_{-t}$-genus.
\end{proof}

Property (i) in the above lemma is quite drastic since it says that 
there should be no dependence of $\vw(v)$ on the divisibility or the type.
This holds for the unrefined invariants.
For the refined invariants it is motivated by the following evidence:

\begin{prop} \label{prop:VW computations} We have
\begin{itemize}
\item[(a)] $\vw(0,0,n) = 0$ for all $n \geq 1$.
\item[(b)] $\vw(0,f,0) = 0$.
\item[(c)] $\vw(r,df ,0) = \begin{cases} 2 & \text{ if } r=1 \\
0 & \text{ if } r \geq 2. \end{cases}$ for all $d \geq 1$.
\end{itemize}
\end{prop}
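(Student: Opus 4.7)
I plan to dispatch each part by identifying the relevant moduli space on $X$ and computing the invariant directly.

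Cases (b) and (c) with $r=1$ are the cleanest: both Mukai vectors are primitive with $2 \nmid \gcd(r,\beta)$, so by Lemma~\ref{lemma:sheaves on X and Y comparision} the moduli on $X$ coincides with $M^Y(v)$. In (b), $M^Y(0,f,0)$ parametrizes degree-$0$ line bundles on the rigid elliptic half-fiber $f$ and is thus isomorphic to $f$ itself; Example~\ref{example:VW clasical} then gives $\VW(0,f,0) = \chivir_{-t}(f) = 0$ since the $\chi_{-t}$-genus of an elliptic curve vanishes. In (c) with $r=1$, Theorem~\ref{thm:Nuer Yoshioka} shows $M^Y(1,df,0)$ is smooth and $0$-dimensional, consisting of the two line bundles $\CO_Y(df)$ and $\CO_Y(df) \otimes K_Y$ (one per determinant), so $\VW = 2$. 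Primitivity of $v$ reduces the $\vw$-recursion to $\vw = \VW$ in both cases.

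For (a), it suffices to show $\VW(0,0,n) = 0$ for all $n \geq 1$; the vanishing of $\vw(0,0,n)$ then follows by M\"obius inversion of the defining recursion. For $n=1$, $M = X$ with $\BC^{*}$-fixed locus $Y$. Using the obstruction-theory long exact sequence from Section~\ref{sec:moduli of sheaves on Enriques}, combined with the Enriques identities $K_Y^{\otimes 2} = \CO_Y$ and $T_Y \otimes K_Y \cong \Omega_Y$, the equivariant-localization integrand over $Y$ simplifies to a $\BQ[t^{\pm 1/2}]$-linear combination of $(1 - K_Y)$ and $(1 - K_Y) \cdot T_Y$, both of which have vanishing Euler characteristic by $\chi(\CO_Y) = 1 = \chi(K_Y)$ and $\chi(T_Y) = -10 = \chi(\Omega_Y)$. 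For general $n$, the fixed locus of $\mathrm{Hilb}^n(X)$ sits over $Y$ and the same Enriques-specific cancellation governs each fixed-component contribution; the cleanest organization assembles all contributions into a plethystic generating function whose single-particle log already vanishes by the $n=1$ case, forcing all positive coefficients to vanish.

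For (c) with $r \geq 2$, I apply Theorem~\ref{prop:DT dependence}: $\vw(r,df,0)$ depends on $v$ only through $v^2 = -r^2$, $\mathrm{div}(v) = \gcd(r,d)$, and the type. When $(v/\mathrm{div}(v))^2 < -1$ (equivalently $\gcd(r,d) < r$), Corollary~\ref{cor:vanishing} gives $\VW(v) = 0$ and inductive M\"obius inversion yields $\vw(v) = 0$. The remaining case $v = r v_0$ with $v_0 = (1,d'f,0)$ and $v_0^2 = -1$ requires a wall-crossing argument: every semistable sheaf in class $r v_0$ is a polystable extension of the two line bundles from the $r=1$ case, and the K-theoretic wall-crossing formalism of Kuhn-Liu-Thimm~\cite{KLT} expresses $\VW(r v_0)$ in terms of $\VW(v_0) = 2$. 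The resulting value matches the contribution of the $k = r$ term in the $\vw$-recursion for $r$ odd and is $0$ for $r$ even, forcing $\vw(r v_0) = 0$. The main obstacle is this final wall-crossing step, where a careful K-theoretic analysis of strictly semistable configurations is needed.
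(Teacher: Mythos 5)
Your parts (b) and (c)-with-$r=1$ are correct and in fact more direct than the paper: you identify the moduli spaces via Lemma~\ref{lemma:sheaves on X and Y comparision}, Theorem~\ref{thm:Nuer Yoshioka} and Example~\ref{example:VW clasical} (two elliptic curves, resp.\ two reduced points, giving $\chivir_{-t}=0$ resp.\ $2$), whereas the paper only obtains these values after the fact by inverting Toda's equation. Your reduction of (c) for $r\nmid d$ to Corollary~\ref{cor:vanishing} is also fine. The genuine gaps are exactly at the two places where the relevant sheaves are strictly semistable, and in both places you do not have a proof.

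First, in (c) with $v=rv_0$, $v_0=(1,d'f,0)$, $v_0^2=-1$ and $r\geq 2$: here $\VW(rv_0)$ is a Joyce-type generalized invariant, not a virtual count on a fine moduli space, and "every semistable sheaf is a polystable extension of the two line bundles, so KLT expresses $\VW(rv_0)$ in terms of $\VW(v_0)=2$, and the result forces $\vw(rv_0)=0$" is not an argument --- the assertion that the outcome equals precisely the $k=r$ multiple-cover term of the $\vw$-recursion is the statement to be proved, and you give no computation of the wall-crossing contributions of the strictly semistable strata. The paper avoids this entirely: it computes the stable-pairs side $\PT_{df,0}$ geometrically (Proposition~\ref{prop:fiber classes NO}, via the identification of $P_{df,0}(K_Y)$ with a $\BZ_2$-fixed Hilbert scheme and Okounkov's formula), and then solves Toda's equation (Theorem~\ref{thm:Todas formula}) for the $\vw(r,df,0)$, using Theorem~\ref{prop:DT dependence} and Corollary~\ref{cor:vanishing} to control which terms appear. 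Second, in (a) for $n\geq 2$ you silently replace the moduli space of semistable length-$n$ sheaves (where again everything is strictly semistable) by $\Hilb^n(X)$; these are different objects, and the bridge between $\chi(\Hilb^n(X),\widehat{\CO})$ and $\vw(0,0,n)$ is the degree-zero wall-crossing identity $\sum_n\chi(\Hilb^n(X),\widehat{\CO})(-p)^n=\prod_n\exp(-[n]_t\,\vw(0,0,n)p^n)$, which you never invoke. Moreover the "plethystic generating function whose single-particle log vanishes by the $n=1$ case" is Okounkov's theorem, not a consequence of your $n=1$ localization; you should cite it as the paper does rather than derive the exponential structure from one coefficient. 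With these two inputs restored, your outline for (a) coincides with the paper's; without them, (a) for $n\geq 2$ and (c) for $r\mid d$, $r\geq 2$ remain unproved.
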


In particular, although $(0,0,1)$ and $(0,f,0)$ have different types and the corresponding moduli spaces are very different ($M(0,0,1) \cong X$ while $M(0,f,0)$ is isomorphic to two copies of an elliptic curve),
they have the same invariant: $\vw(0,0,1) = \vw(0,f,0) = 0$.

The case (c) shows that the divisibility should not play a role.
%
%

\begin{rmk}
We could define Nekrasov-Okounkov twisted "BPS classes" $\Omega^{\mathrm{NO}}(v)$ by
\[ \VW(v) = \sum_{\substack{k|v \\ k \geq 1}} \frac{1}{k [k]_{t}} 
\Omega^{\mathrm{NO}}(v/k)|_{t \mapsto t^k}. \]
For compact Calabi-Yau threefolds one would expect these to have integrality propertis \cite{JS}. Here however this fails, e.g.
$\Omega^{\mathrm{NO}}(2,0,0) = -1/[2]_{t}$ with unrefined limit $-1/2$. \qed
\end{rmk}
\subsection{Computation I: Fiber classes} \label{subsec:fiber classes}
\begin{prop} \label{prop:fiber classes NO} We have
\[ \sum_{d \geq 0} \PT_{df,0} q^d = \prod_{m \geq 1} \frac{(1-q^{2m})}{(1-q^m)^{2}}. \]
\end{prop}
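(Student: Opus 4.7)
The plan is to apply Toda's refined wall-crossing formula (Theorem~\ref{thm:Todas formula}) specialized to classes $\beta \in \BZ_{\geq 0} f$. Since $f^2 = 0$, every class $v = (r, ef, n)$ has Mukai square $v^2 = -r(r+2n)$, independent of $e$, which makes the bookkeeping tractable.

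The first step is to identify the non-vanishing $\VW(r, ef, n)$ entering the product on the right-hand side. For $r \geq 1$ and $n \geq 1$, writing $v = m \cdot w$ with $w = (r_0, e_0 f, n_0)$ primitive, one has $w^2 = -r_0(r_0 + 2n_0) \leq -3$, so $\VW(v) = 0$ by Corollary~\ref{cor:vanishing}. For $r \geq 1$ and $n = 0$, the multiple-cover relation combined with Proposition~\ref{prop:VW computations}(c) (which says $\vw(r_0, e_0 f, 0) = 2$ iff $r_0 = 1$) yields
\[ \VW(r, ef, 0) = \begin{cases} \dfrac{2}{r[r]_t} & \text{if $r$ is odd and $r \mid e$,} \\[4pt] 0 & \text{otherwise.} \end{cases} \]
The remaining case $r = 0$ will be handled by combining the vanishings $\vw(0, 0, n) = 0$ and $\vw(0, f, 0) = 0$ of Proposition~\ref{prop:VW computations}(a), (b) with the independence statement of Theorem~\ref{prop:DT dependence} to deduce $\VW(0, ef, n) = 0$ for all $(e,n) \neq (0,0)$: at fixed square $v^2 = 0$, every (divisibility, type)-class of such $v$ contains a representative of the form $(0, 0, m)$ or $(0, f, 0)$, so the vanishing transfers.

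With these evaluations, the right-hand side of Toda restricted to $\beta \in \BZ_{>0} f$ becomes $p$-independent:
\[
\prod_{\substack{r \geq 1 \text{ odd} \\ e' \geq 1}} \exp\!\left( [r]_t \cdot \frac{2}{r[r]_t} Q^{re'f} \right) \;=\; \exp\!\left( 2 \sum_{\substack{r \geq 1 \\ r \text{ odd}}} \frac{1}{r} \frac{q^r}{1-q^r} \right),
\]
where $q := Q^f$ and $(-1)^{r-1}=1$ for odd $r$. Since only the $n=0$ part of the left-hand side can match a $p$-independent right-hand side, this identifies $\sum_{d \geq 0} \PT_{df,0}\, q^d$ with the expression above.

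The final step is the $q$-series identity
\[
\exp\!\left( 2 \sum_{\substack{r \geq 1 \\ r \text{ odd}}} \frac{1}{r} \frac{q^r}{1-q^r} \right) \;=\; \prod_{m \geq 1} \frac{1-q^{2m}}{(1-q^m)^2},
\]
which I would check by comparing logarithms: both sides equal $\sum_{d \geq 1} \bigl( 2 \sum_{k \mid d,\, k \text{ odd}} 1/k \bigr) q^d$, as follows from a direct manipulation of $2\sigma_{-1}(d) - \sigma_{-1}(d/2)$ split according to the $2$-adic valuation of $d$. The main obstacle will be the $r=0$ vanishing step: the structure theorem only transfers $\VW$-invariants between classes sharing all three of square, divisibility, and type, and matching each $(0, ef, n)$ with a known-vanishing representative requires a careful case analysis; the remaining wall-crossing bookkeeping and the concluding $q$-series identity are routine.
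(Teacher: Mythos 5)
Your argument runs in the opposite logical direction from the paper and is circular as written. The key input you invoke, Proposition~\ref{prop:VW computations}(c) (equivalently the values $\VW(r,re'f,0)=2/(r[r]_t)$ for $r$ odd), is not an independent fact: in the paper it is \emph{deduced} from Proposition~\ref{prop:fiber classes NO} by exactly the Toda-equation manipulation you perform, read in reverse. For $r\geq 2$ with $r\mid e$ the class $(r,ef,0)$ is $r$ times a primitive class of square $-1$, so Corollary~\ref{cor:vanishing} says nothing, the moduli problem is strictly semistable, and the value $2/(r[r]_t)$ is precisely the refined multiple-cover statement that the fiber-class computation is designed to establish. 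The paper instead proves the Proposition directly and geometrically: $P_{df,0}(K_Y)$ is identified with the $\BZ_2$-fixed Hilbert scheme $\Hilb^d(\p^1\times\BC)^{\BZ_2}$, which is evaluated by $K$-theoretic localization using Bryan--Gyenge's identification of $\Hilb^{m_0,m_1}([\BC^2/\BZ_2])$ with Hilbert schemes of $T^{\ast}\p^1$ together with Okounkov's formula; only afterwards is Toda's equation used to extract $\vw(r,df,0)$.

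There is a second gap in your treatment of the rank-zero terms. You claim every class $(0,ef,n)$ with $n\geq 1$ shares (square, divisibility, type) with some $(0,0,m)$ or with $(0,f,0)$. This fails: $(0,0,m)$ has odd type and $(0,f,0)$ has divisibility $1$, whereas for instance $(0,2f,2)$ has square $0$, divisibility $2$ and even type, so Theorem~\ref{prop:DT dependence} does not transfer either known vanishing to it; its contribution $\exp(-[2]_t\,\VW(0,2f,2)\,q^2p^2)$ is therefore not controlled by Proposition~\ref{prop:VW computations}(a),(b). (The $r=0$, $n=0$ terms are indeed harmless since $[0]_t=0$.) Your concluding $q$-series identity and the bookkeeping of the odd-rank terms are correct, but both essential inputs --- the values of $\VW(r,re'f,0)$ and the vanishing of the imprimitive even-type rank-zero invariants --- are unestablished in your setup, so the argument does not stand on its own.
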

\begin{proof}
Let $\BZ_2$ act on $\BC^2$ by $(x,y) \mapsto (-x,-y)$.
Let $T = \BG_m^2$ act on $\BC^2$ by scaling the coordinates.
This $T$ action induces an action on $\BC^2/\BZ_2$,
and hence on its crepant resolution $T^{\ast} \p^1$.
Let $\mathbf{a}$ be a torus weight of $T$. Let $q=q_0q_1$. Let $\Hilb^{m_0,m_1}([\BC^2/\BZ_2])$ be the Hilbert scheme of points of $[\BC^2/\BZ_2]$ parametrizing $\BZ_2$-equivariant zero-dimensional subschemes $z \subset \BC^2$ with $H^0(\CO_z)$
the direct sum of $m_0$ copies of the trivial and $m_1$ copies of the non-trivial irreducible $\BZ_2$ representation \cite{BG}. Let also 
\begin{equation}
 \Hilb^m([\BC^2/\BZ_2]) = \bigsqcup_{m_0+m_1=m} \Hilb^{m_0,m_1}([\BC^2/\BZ_2]). \label{erfer3}
 \end{equation}

Since $\Hilb^{m_0,m_1}([\BC^2/\BZ])$ is equivariantly deformation
equivalent to $\Hilb^{m_0-(m_0-m_1)^2}(T^{\ast} \p^1)$, see \cite{BG},
and by Okounkov's \cite{Okounkov} computation of $\chi( \Hilb^n(T^{\ast} Y), \mathrm{taut})$, we have
\begin{equation} \label{bla} \sum_{m_0,m_1} \chi(\Hilb^{m_0,m_1}( [\BC^2/\BZ_2], \bigwedge^{\bullet} (\Omega \otimes \mathbf{a}) ) q_0^{m_0} q_1^{m_1}
=
\left( \sum_{m \in \BZ} q_1^m q^{m^2} \right)
\Exp\left( \chi( T^{\ast}\p^1, \bigwedge^{\bullet}(\Omega \otimes \mathbf{a}) ) \right). \end{equation}

If $t_1,t_2$ are the tangent weights at the origin of $\BC^2$, then 
at the $2$ fixed points of $T^{\ast} \p^1$ we have tangent weights
$t_1^{-1} t_2, t_1^2$ and $t_1 t_2^{-1}, t_2^2$ (To see this, note that $\BC^2/\BZ_2$ is the spectrum of $\BC[x^2,xy,y^2] = \BC[a,b,c]/(b^2-ac)$, and $T^{\ast} \p^1$ is obtained by blowing up the origin.)
This allows one to compute the right hand side of \eqref{bla} by $K$-theoretic equivariant localization.

In particular, specializing the weight to $\mathbf{a}=t_1$ we obtain
\[
\sum_{m} \chi( \Hilb^m( [\BC^2/\BZ_2]), \bigwedge( \Omega \otimes t_1) t_1^{-\dim/2} ) q^n
= \prod_{m \geq 1} \frac{(1-q^{2m})^2}{1-q^m} \Exp\left( \frac{q^2}{1-q^2} \cdot \frac{ t_1 t_2 + 2 t_1 + 2 t_2 + 1 }{ (t_1 + 1)(t_2+1) } \right).
\]
where we used the well-known modular identity
\[ \sum_{m \in \BZ} q^{2m^2 + m} = 
\prod_{m \geq 1} \frac{(1-q^{2m})^2}{1-q^m}. \]

We have the isomorphism of moduli spaces
\[
P_{df,0}(K_Y) \cong \Hilb^n([\p^1 \times \BC]/\BZ_2) = \Hilb^n(\p^1 \times \BC)^{\BZ_2},
\]
where $\BZ_2$ acts on $\p^1 \times \BC$ by $(\inv_{\p^1},-1)$ with $\inv_{\p^1} : \p^1 \to \p^1$ an involution with fixed points $0,\infty$.
We can equip $\p^1 \times \BC$ with a $\BZ_2$ equivariant action by the torus $T = \BG_m^2$.
The two fixed points $(0,0)$ and $(\infty, 0)$ can be taken to have torus weights $t,u$ and $t,-u$. By localizing by $T$ and using the above result we obtain:
\begin{align*}
\sum_{d \geq 0} \PT_{df,0} q^d 
& =
\prod_{m \geq 1} \frac{(1-q^{2m})^4}{(1-q^m)^2} \Exp\left( \frac{q^2}{1-q^2} \cdot \left[ \frac{ t u + 2 t + 2 u + 1 }{ (t + 1)(u +1) }+ \frac{ t u^{-1} + 2 t + 2 u^{-1} + 1 }{ (t + 1)(u^{-1} +1) } \right] \right) \\
& = 
\prod_{m \geq 1} \frac{(1-q^{2m})^4}{(1-q^m)^2} \Exp\left( 3 \frac{q^2}{1-q^2} \right) \\
& = \prod_{m \geq 1} \frac{(1-q^{2m})^1}{(1-q^m)^2}
\end{align*}
\end{proof}

Let's relate this computation to Vafa-Witten invariants.
Toda's equation in this case reads
\begin{equation} \label{abc}
\sum_{d} \PT_{df,0} q^d
=
\prod_{r \geq 0, d \geq 0} \exp( (-1)^{r-1} [r]_t \VW(r,df,0) q^d ).
\end{equation}
So taking $\Log^{(2)}$ of \eqref{abc} and using the Proposition we get
\[
\Log^{(2)}( \sum_{d} \PT_{df,0} q^d ) = \sum_{m \geq 1} 2 q^m
= \sum_{r \geq 0} \sum_{d>0} (-1)^{r-1} \vw(r,df,0) [r]_t q^d,
\]
so
\[ 2 = \sum_{r} (-1)^{r-1} \vw(r,df,0) [r]_t. \]
So we find $\vw(r,df,0) = 2$ if $r=1$ and $=0$ if $r \geq 2$,
proving Proposition~\ref{prop:VW computations}(c).

\subsection{Computations II: Points}
We consider the $K$-theoretic DT invariants of the invariants of the Hilbert schemes of points of a quasi-projective Calabi-Yau threefold $X$. By a result of Okounkov \cite{Okounkov} they are given by
\[
\sum_{n} \chi(\Hilb^n(X), \widehat{\CO}) (-p)^n
= \Exp\left( \chi( X, \frac{p \CL_4 ( T_X + K_X - T_X^{\vee} - K_X^{-1}}{(1-p \CL_4)(1-p \CL_5^{-1})}) \right),
\]
where $\CL_4 = \CL_5 = \omega_X^{1/2}$ (we refer to \cite{Okounkov} for the precise notation and requirements on $X$). 

For $X = K_Y$ where $Y$ is an Enrique surface, we have $\omega_X = t^{-1}$, $T_X|_{Y} = T_Y + t$ so
\[
(T_X + K_X - T_X^{\vee} - K_X^{-1})|_{Y}
=
T_Y + \Omega_Y = 0 \quad \in K(Y)\otimes \BQ,
\]
where in the last step we used that $\omega_Y$ is $2$-torsion and $T_Y, \Omega_Y$ are the same class in $K$-theory after tensoring with $\BQ$.
This shows that
\[ \sum_{n} \chi(\Hilb^n(X), \widehat{\CO}) (-p)^n = 1. \]

On the other hand, we have the wall-crossing formula (e.g. \cite{Toda})
\[
\sum_{n} \chi(\Hilb^n(X), \widehat{\CO}) (-p)^n
=
\prod_{n \geq 1} \exp\left( \sum_{n \geq 1} - [n]_t \vw(0,0,n) p^n \right).
\]
So by comparing, we find $\vw(0,0,n)=0$,
proving Proposition~\ref{prop:VW computations}(a).

\subsection{Computations III: Low degree}
\label{subsec:Computations 3 low degree}
The cohomology $H^2(Y,\BZ)$ together with the intersection pairing can be decomposed as 
\[ H^2(Y,\BZ) = U \oplus E_8(-1), \]
where the hyperbolic lattice $U=\binom{0\ 1}{1\ 0}$ has a basis $s,f$ which are smooth elliptic half-fibers of elliptic fibrations on $Y$, so $s^2=f^2=0$ and $s \cdot f=1$.
We let $s_1,s_2 \subset Y$ denote the two curves with $[s_i]=s$, and $f_1, f_2 \subset Y$ the two curves with $[f_i]=f$.

We consider the refined Pandharipande-Thomas invariants in the cases:
\begin{enumerate}
	\item $\beta_1 = s+f$
	\item $\beta_2 = 2s+2f+\alpha$
	\item $\beta_3 = 2s+2f+\gamma$
\end{enumerate}
where $\alpha \in E_8(-1)$ is a class of square $-4$ and $\gamma \in E_8(-1)$ is of square $-2$.
We have $\beta_i^2=2i$.

The K-theoretic Pandharipande-Thomas invariants are invariant under deformations. Thus to compute them it suffices to consider classes in $H^2(Y,\BZ)$ modulo the monodromy group. The monodromy orbits of positive square classes are classified by points in a certain fundamental region of the Weyl group of $U\oplus E_8(-1)$ and are explicitly known, see \cite[Cor.1.5.4]{EnriquesBook}.
For square $2$ classes there is a unique orbit, given by $\beta_1$.
For square $4$ classes there are two orbits, $\beta_2$ and $s+2f$.
For square $6$ classes there are two orbits, $\beta_3$ and $s+3f$.
The linear systems $|s+2f|$ and $|s+3f|$ are more difficult to study since they contain non-reduced curves, such as $s + 2f$. On the other, hand the linear systems $|\beta_i|$ for $i=1,2,3$ contain only reduced curves. The above list hence represents the curve classes of square $2,4,6$ with only reduced members.

\begin{rmk}
A class $\beta \in H^2(Y,\BZ)$ is here taken modulo $2$-torsion. By $|\beta|$ we mean one of the two linear systems of curves in class $\beta$. The linear system we choose depends on the choice of lifting $\beta$ to an integral class $\tilde{\beta}$ (in cohomology with torsion).
However, $\tilde{\beta}$ and $\tilde{\beta}+K_Y$ lie in the same monodromy orbit whenever $2 \nmid \beta$ (see \cite{Knutsen} and \cite[App.C]{Enriques}), so the choice does not matter for our considerations here.
\end{rmk}

\subsubsection{The class $\beta_1=s+f$}
The linear system $|\beta_1|=\p^1$ has precisely 18 singular members, of which 16 are irreducible nodal curve and of which $2$ are reducible consisting of $2$ smooth elliptic curves glued along a point (given by the divisors $s_1+f_1$ and $s_2+f_2$), see \cite[Prop.1.19(ii)]{Sacca}.
Hence we get
\[
\PT_{n,\beta_1} = \Pt_{n,\beta_1} + \sum_{n_1+n_2=n} \Pt_{n_1,s} \Pt_{n_2,f}.
\]
The last term on the right is determined by $\Pt_{n,s} = \Pt_{n,f} = \PT_{n,f} = 2 \delta_{n,0}$.
The first term on the right can be computed through \eqref{toda small}.
By Corollary~\ref{cor:vanishing} there are only two types of possible contributions: $\VW(0,\beta_1,n)$ and $\VW(1,\beta_1,n)$. The contributions $\VW(1,\beta_1,n)$ are determined by the invariants of the Hilbert scheme of points and are known.
On the other hand, 
\[ \VW(0,\beta_1,n) = \chivir_{-t}(M^Y(0,\beta_1,1)). \]
The Hodge numbers of $M^Y(0,\beta_1,1)$ were computed by Sacca \cite{Sacca}. One observes that:
\[
\chivir_{-t}(M^Y(0,\beta_1,1)) = 0.
\]
With $\VW(1,\beta_1,0) = \VW(1,0,-1) = 2 \chivir_{-t}(Y)$
and $\VW(1,\beta_1,1) = \VW(1,0,0)=2$ we find in summary:
\begin{align*}
\sum_{n} \PT_{n,\beta_1} (-p)^n
& = \VW(1,\beta_1,0) + \VW(1,\beta_1,1) [2]_t (p + p^{-1}) + 4 \\
& = (2t^{-1/2} + 2 t^{1/2})p^{-1} + (2 t^{-1} + 24 + 2 t) + (2 t^{-1/2} + 2 t^{1/2})p.
\end{align*}

An alternative way for computing these invariants is as follows.
All curves in class $\beta_1$ are scheme-theoretically supported on the Enriques surface $Y \subset X$. The moduli space of stable pairs is hence isomorphic to two copies of the relative Hilbert scheme $\Hilb^n(\CC/|\beta_1|)$, see \cite{PT_BPS}.
Moreover, since the family of curves in class $\beta_1$ is versal, the relative Hilbert scheme is smooth. It follows, that
\[
\PT_{n,\beta_1} = 2 \chivir_{-t}( \Hilb^{n+1}(\CC/|\beta_1|) ).
\]

The right hand side was computed by G\"ottsche-Shende \cite{GS} for all K-trivial surfaces by using that it universally dependent only on basic intersection numbers of the surfaces and computing the K3 and abelian surface case. Precisely, combining Theorems 2,5,6 of \cite{GS} one obtains an independent computation of $\PT_{n,\beta_1}$ which matches our earlier result.

\subsubsection{The class $\beta_2=2s+2f+\alpha$}
The linear system $|\beta_2| \cong \p^2$ is base-point free and defines a $4:1$ branched cover $Y \to \p^2$. The geometry of this morphism has been intensively studied in the literature, see \cite[Sec.3.4]{EnriquesBook}.

By a straightforward lattice analysis, one checks that all curves in $|\beta_2|$ are reduced. Moreover, they are irreducible, except for 9 curves which are of the form $C_1+C_2$, where $C_i^2=0$ and $C_1 \cdot C_2=2$, i.e. which are given by two smooth elliptic curves glued along $2$ points.

Thus as before
\[
\PT_{n,\beta_2} = \Pt_{n,\beta_2} + 9 \sum_{n_1+n_2=n} \Pt_{n_1,f} \Pt_{n_2,f}
= \Pt_{n,\beta_2} + 36 \delta{n,0}.
\]
The conjectural answer extracted from Theorem~\ref{thm:PT} can then shown to match the G\"ottsche-Shende computation (which is valid if we assume that the family of curves in versal, which is natural to expect). In total
one obtains:
\begin{align*}
\sum_{n}(-p)^n \PT_{n,\beta_2}
& =
(2 s^{-2} + 2 + 2s^{2})(t^{-2}+t^2) + (2 s^{-3} + 22 s^{-1} + 22s + 2s^{3})(t^{-1} + t) \\[-7pt]
& + 2 s^{-4} + 22 s^{-2} + 168 + 22s^{2} + 2s^{4}.
\end{align*}

In particular, since all other Vafa-Witten invariants are known in \eqref{toda small}, the G\"ottsche-Shende computation implies 
$\vw(0,\beta_2,1)=0$, and so
\[ \chi_{-t}(M(0,\beta_2,1)) = 0. \]

\subsubsection{The class $\beta_3=2s+2f+\alpha$}
Here $|\beta_3|\cong \p^3$ is base-point free and defines a double cover of a cubic surface, \cite[Sec.3.3]{EnriquesBook}.
There are the following non-trivial effective splittings of the class $\beta_3 = 2s+2f+\gamma$:
\begin{enumerate}
	\item[(i)] $\beta_3 = s + f + (s+f+\alpha)$, which correspond to curves that are given by three elliptic curves meeting each other in a point,
	\item[(ii)] three splittings $\beta_3 = C_1 + C_2$, where $C_1^2=2$ and $C_2^2=0$ and $C_1 \cdot C_2 = 2$, corresponding to a genus $2$ curve and a genus $1$ curve meeting in $2$ points.
	\item[(iii)] 28 splitting $\beta_3 = C_1 + C_2$, where $C_1^2=0$ and $C_2^2=0$ and $C_1 \cdot C_2 = 3$, corresponding to two elliptic curves meeting in 3 points (this corresponds to $\beta_3 = (s+f+\gamma_1) + (s + f + \gamma_2)$, where $\gamma_i \in E_8(-2)$ have square $-2$ and $\gamma_1 \cdot \gamma_2=1$).
\end{enumerate}
Using that $\pt_{n,f}$ is non-zero only for $n=0$, we obtain that
\[
\PT_{n,\beta_3} =
\Pt_{n,\beta_3}  + \Pt_{0,f}^3 + 3 \Pt_{n,s+f} \Pt_{0,f} + 28 \Pt_{0,f}^2.
\]
We have then checked that the G\"ottsche-Shende computation (which is valid for a versal family of curves, which is natural to expect here) matches the formula in Theorem~\ref{thm:PT}.
From this (modulo this transversality issue) we again obtain
$\chi_{-t}(M(0,\beta_3,1))=0$.

\subsection{Holomorphic anomaly equations} \label{subsec:HAE}
In this last section, we consider the refined holomorphic anomaly equation of our conjectural PT generating series.

Write the PT partition function as
\[ \sum_{\beta \geq 0} \sum_{n \in \BZ} \PT_{n,\beta} (-p)^n Q^{\beta}
= \exp(F) \]
and expand the series
\[ F = \sum_{k \geq 0} Q^{ks} F_{k}, \quad F_k = \sum_{d \geq 0} \sum_{\alpha \in E_8(-1)} N_{ks+df+\alpha} q^{d} \zeta^{\alpha}, \]
where we set $Q^{f}=q$ and $Q^{\alpha}=\zeta^{\alpha}$,
and used the splitting $H^2(Y,\BZ) \cong \BZ s \oplus \BZ f \oplus E_8(-1)$.

We then have that
\[
F_1 = 
2
\frac{\Theta(t, q^2)\Theta(p t^{1/2}, q^2) \Theta(p t^{-1/2}, q^2) \eta(q^2)^8}{\Theta(t,q)\Theta(p t^{1/2},q) \Theta(p t^{-1/2},q) \eta(q)^{16}}
\vartheta_{E_8}(\zeta,q)
\]
where $\vartheta_{E_8}(\zeta,q) = \sum_{\alpha \in E_8} \zeta^{\alpha} q^{\alpha^2/2}$ is the theta-function of the $E_8$-lattice.
The function $F_1$ is a Jacobi form for $\Gamma_0(2)$ of a certain index.
Moreover, $F_k$ for $k>1$ is the $k$-th Hecke transform of $\Gamma_0(2)$ applied to $F_1$, see \cite[Sec.2.5]{Enriques} for definitions and a similar case.

Consider the variable change
\begin{equation} t=e^{\epsilon_1 + \epsilon_2}, \quad p=e^{\frac{\epsilon_1-\epsilon_2}{2}}. \label{var change} \end{equation}
and let us write $\Theta(z)$ instead of $\Theta(e^z)$. Then we get
\[
\left[ F_1 \right]_{\zeta^0} = 
2
\frac{\Theta(\epsilon_1+\epsilon_2, q^2)\Theta(\epsilon_1, q^2) \Theta(\epsilon_2, q^2) \eta(q^2)^8}{\Theta(\epsilon_1+\epsilon_2,q)\Theta(\epsilon_1,q) \Theta(\epsilon_2,q) \eta(q)^{16}}
\]
It is well-known that 
 $\Theta(z) = z \exp(-2 \sum_{k \geq 2} G_k/k! z^k)$, where $G_k$ are the Eisenstein series, see e.g. \cite{Enriques}.
Hence $\Theta(z)$ can be viewed as a power series in $z$ with coefficient of $z^{\ell}$ a weight $\ell-1$ quasi-modular form for $\SL_2(\BZ)$.
The same holds for $\Theta(z,q^2)$, but with the full modular group $\SL_2(\BZ)$ changed to $\Gamma_0(2)$. We hence find that:

\begin{itemize}
\item 
$F_1$ is a power series in $\epsilon_1, \epsilon_2$ with 
coefficients
\[ [ F_1 ]_{\epsilon_1^{a} \epsilon_2^b} = 
f_{a,b}(q) \vartheta_{E_8}(\zeta,q) \frac{\eta(q^2)^8}{\eta(q)^{16}} \]
where $f_{a,b}$ is a quasi-modular form for $\Gamma_0(2)$ of weight $a+b$.
\end{itemize}

The ring of quasi-modular forms for $\Gamma_0(2)$ is the free polynomial ring in $G_2$ over the ring of modular forms,
$\QMod(\Gamma_0(2)) =  \Mod(\Gamma_0(2))[G_2]$.
We hence can speak of the $G_2$-derivative of quasi-modular forms \cite{123}. 
One gets
$\frac{d}{dG_2} \Theta(z,q) = -z^2 \Theta(z,q)$ and $\frac{d}{dG_2} \Theta(z,q^2) = -\frac{1}{2} z^2 \Theta(z,q^2)$.
Inserting this we find that $F_1$ satisfies
the "holomorphic anomaly equation"
\[
\frac{d}{dG_2} F_1 = (\epsilon_1^2 + \epsilon_2^2 + \epsilon_1 \epsilon_2) F_1.
\]
Considering how the Hecke operators interact with $\frac{d}{dG_2}$, see \cite[Prop.2.7]{Enriques}, we get for $k \geq 1$,
\begin{equation} \label{refinedHAE}
\frac{d}{dG_2} F_k = k  (\epsilon_1^2 + \epsilon_2^2 + \epsilon_1 \epsilon_2) F_k.
\end{equation}

Equation \eqref{refinedHAE} can be viewed as a refined holomorphic anomaly equation. It matches the holomorphic anomaly equations for the refined Gromov-Witten invariants as introduced by \cite{BS,Schuler},
and so gives evidence for the refined GW/PT correspondence of \cite{BS}.

\begin{rmk}
We sketch how the holomorphic anomaly equation for the refined Gromov-Witten invariants can be computed. One considers
the Calabi-Yau 5-fold
\[ M = K_Y \times \BC^2 \]
on which a $3$-dimensional torus acts by scaling the fibers on $K_Y$ and by the standard action on $\BC^2$ with weights $t_1, t_2$. The $2$-dimensional subtorus $(\BC^{\ast})^2$ given by the elements $((\lambda_1 \lambda_2)^{-1}, \lambda_1, \lambda_2)$ for $\lambda_1, \lambda_2 \in \BC^{\ast}$ acts Calabi-Yau. We let $\epsilon_i = c_1(t_i)$.
The refined Gromov-Witten potential proposed by Brini-Sch\"uler \cite{BS, Schuler} is
\[
F^{\GW} = \sum_{g} \sum_{\beta} Q^{\beta} \int_{[ \Mbar_{g}(M,\beta) ]^{\vir}} 1.
\]
By definititon it is computed by the virtual localization formula
\[
F^{\GW} = \sum_{g} \sum_{\beta} Q^{\beta} \int_{[ \Mbar_{g}(Y,\beta) ]^{\vir}}
\frac{\BE^{\vee}(-(\epsilon_1+\epsilon_2)) \BE^{\vee}(\epsilon_1) \BE^{\vee}(\epsilon_2)}{-(\epsilon_1 + \epsilon_2) \epsilon_1 \epsilon_2}.
\]
where $\BE^{\vee}(x) = x^g - \lambda_1 x + \ldots + (-1)^g \lambda_g$.

We can consider the partial sums:
\[
F^{\GW}_{g,k} = \sum_{\beta=ks+df+\alpha}
\int_{[ \Mbar_{g}(Y,\beta) ]^{\vir}} \frac{\BE^{\vee}(-(\epsilon_1+\epsilon_2)) \BE^{\vee}(\epsilon_1) \BE^{\vee}(\epsilon_2)}{-(\epsilon_1 + \epsilon_2) \epsilon_1 \epsilon_2}
\]
We apply the holomorphic anomaly equation (HAE) for Enriques surfaces \cite[Thm 4.3]{Enriques}. Similar to \cite[4.4.4]{Enriques} only the second term contributes in the HAE, and gives (in the notation of \cite{Enriques})
\begin{align*}
\frac{d}{dG_2} F^{\GW}_{g,k} & = 2 F_{g_1=1,k_1=0}(\BE^{\vee}(-(\epsilon_1+\epsilon_2)) \BE^{\vee}(\epsilon_1) \BE^{\vee}(\epsilon_2) ; \tau_0(1)) \\
& \quad \quad \times 
F_{g_2=g-1,k_2=k}\left( \frac{\BE^{\vee}(-(\epsilon_1+\epsilon_2)) \BE^{\vee}(\epsilon_1) \BE^{\vee}(\epsilon_2)}{-(\epsilon_1 + \epsilon_2) \epsilon_1 \epsilon_2 }; \tau_0(f) \right) \\
& =
2 \int_Y c_2(Y) \int_{\Mbar_{1,1}} \BE^{\vee}(-(\epsilon_1+\epsilon_2)) \BE^{\vee}(\epsilon_1) \BE^{\vee}(\epsilon_2)
\cdot k F^{\GW}_{g-1,k} \\
& =k (\epsilon_1^2 + \epsilon_2^2 + \epsilon_1 \epsilon_2) F^{\GW}_{g-1,k}.
\end{align*}
Moreover, as conjectured by \cite{BS}, $\exp(F^{\GW}) = \sum_{\beta \geq 0} \sum_{n \in \BZ} \PT_{n,\beta} (-p)^n Q^{\beta}$ under the variable change
	 $t=e^{\epsilon_1+\epsilon_2}$ and $p=e^{\frac{\epsilon_1-\epsilon_2}{2}}$,
so we get a match with \eqref{refinedHAE}.\footnote{Indeed, \cite{BS} define
	$\epsilon_{+} = \frac{\epsilon_1 + \epsilon_2}{2}$, $\epsilon_{-} = \frac{\epsilon_1 - \epsilon_2}{2}$
	$q_{+} = e^{\epsilon_{+}}$, $q_{-} = e^{\epsilon_{-}}$,
	and conjecture that
	\[ \sum_{n} \chi_{\BC^{\ast}_{q_{+}}}( P_{n,\beta}(K_Y), \widehat{\CO}^{\vir} ) (-q_{-})^{n} = \exp(F_{\GW}). \]
	The varible $q_{+}$ is a square root of the equivariant weight of $\omega_{K_Y}$, so $t=q_{+}^2$.}	
\end{rmk}

\section{The case of K3 surfaces} \label{section:K3}
We sketch how our methods for the Enriques surface
can also be applied to K3 surfaces (after accounting for modifications coming from $H^{2,0}(S) \neq 0$).

Let $S$ be a K3 surface.
Let $\VW(r,\beta,n)$ be the refined Vafa-Witten invariant (in the sense of \cite{Thomas}) counting semistable sheaves $F$ on $S \times \BC$ with Chern character $\ch(F) = (r,\beta,n) \in H^{\ast}(S,\BZ)$.
We also let $\PT_{n,\beta}$ be the NO-twisted $K$-theoretic reduced PT invariants of $S \times \BC$.

The results of Section~\ref{subsec:basic results}
have the following analogue for K3 surfaces:
\begin{thm}(Toda's equation) \label{thm:Todas formulaK3} We have:
\[
(-1)^n \PT_{n,\beta}
=
\begin{cases}
-\sum_{r \geq 0} [n+2r]_t \VW(r,\beta,n) & \text{ if } n \geq 0 \\
-\sum_{r > 0} [ |n| + 2r ]_t \VW(r,\beta,|n|) & \text{ if } n<0.
\end{cases}
\]
\end{thm}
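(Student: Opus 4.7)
The plan is to follow the same strategy as the proof of Theorem~\ref{thm:Todas formula} for Enriques surfaces, adapted to the K3 setting where reduced invariants are used. I would view $S \times \BC$ as a K3 fibration and apply Toda's construction \cite{Toda} of a one-parameter family of stability conditions on $D^b(S \times \BC)$: at one end the stable objects are the stable pairs computing $\PT_{n,\beta}$, at the other end there is a trivial contribution from a single line bundle on each fiber. Walls crossed along the path correspond to $2$-dimensional compactly supported sheaves on $S \times \BC$, and the K-theoretic wall-crossing formalism of Kuhn--Liu--Thimm \cite{KLT} governs the transitions, with the wall-crossing factor between two objects $E_1, E_2$ given by the quantum integer $[\chi(E_1, E_2)]_t$, exactly as in the Enriques proof.

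The specific factor $[n+2r]_t$ in the statement arises as the quantum integer of the Euler pairing of the PT ``base'' with a semistable sheaf $E$ whose pushforward has Chern character $(r,\beta,n)$. With Mukai vectors $v(\CO_S)=(1,0,1)$ and $v(E)=(r,\beta,n+r)$, one computes
\[
\chi(\CO_S,E) \,=\, -\langle v(\CO_S),v(E)\rangle \,=\, n+2r,
\]
to be contrasted with $\chi(\CO_Y) = 1$ and the Mukai shift $r/2$ giving the factor $n+r$ in the Enriques case. The split of the statement into $n \geq 0$ and $n < 0$ reflects the asymmetry of Toda's path: for $n \geq 0$ the rank-zero class $(0,\beta,n)$ itself has $\chi \geq 0$ and participates on the ``forward'' side, while for $n < 0$ the corresponding torsion sheaves contribute only through the dual family of walls with strict $r > 0$.

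The decisive difference from the Enriques case is that the K3 formula is purely \emph{additive}: there is no exponential/plethystic wrapping and no multiple-cover correction. The reason is reducedness: the extra cosection coming from $H^{2,0}(S) \neq 0$ kills all multi-object Joyce-type wall-crossing contributions, since every strictly semistable configuration has negative reduced virtual dimension. The main obstacle will be rigorously establishing this vanishing within the K-theoretic Kuhn--Liu--Thimm framework; in the cohomological setting it is the familiar vanishing of reduced Joyce--Song invariants on K3, but a K-theoretic analogue has to be formulated, presumably using a refined cosection on the enhanced moduli stacks (parallel to the considerations of Thomas \cite{ThomasK3}). Once this reduced vanishing is in place, the assembly of the $[n+2r]_t$ factors and the signs $(-1)^n$ on the left is a direct parallel of the Enriques argument.
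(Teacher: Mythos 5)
Your proposal follows essentially the same route as the paper: Toda's wall-crossing path together with the refined (reduced) wall-crossing formalism, with the $[n+2r]_t$ factor coming from the Euler pairing $\chi(\CO_S,E)=n+2r$, and the additivity of the formula explained by the linearization of the wall-crossing for reduced invariants. The paper handles the one point you flag as the "main obstacle" --- the K-theoretic reduced wall-crossing on the local K3 --- by citing Thomas \cite{ThomasK3} directly, motivating the linearization by the formal substitution $[\,\cdot\,]^{\mathrm{mot}}\mapsto\epsilon(\,\cdot\,)$ with $\epsilon^2=0$, which is the same mechanism you describe via the cosection.
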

\begin{proof}
This again follows immediately from Toda's wall-crossing setup of \cite{Toda}
as soon as we understand how the wallcrossing formula applied to refined reduced invariants on the local K3 surface.
This has been obtained by Thomas \cite{ThomasK3} and so the above result is a direct consequence of his work.

To give some motivation for the final formula here,
let us assume that the wall-crossing formula should hold also "{\em motivically}", i.e. on the level of virtual motives, which would read:
\begin{align*}
\sum_{\beta} \sum_{n \in \BZ} [\PT_{n,\beta}]^{\mathrm{mot}} (-p)^n Q^{\beta}
= & \prod_{\substack{r \geq 0 \\ \beta > 0 \\ n \geq 0 }} \exp\left( -[n+r]_t [\VW(r,\beta,n)]^{\mathrm{mot}} Q^{\beta} p^n \right)  \\
\times & \prod_{\substack{r>0 \\ \beta>0 \\ n>0}} \exp\left( [n+r]_t [\VW(r,\beta,n)]^{\mathrm{mot}} Q^{\beta} p^{-n} \right)
\end{align*}
We want to specialize to the reduced refined $K$-theoretic invariants.
To capture the wall-crossing behaviour of reduced invariants, this specialization should take the form
\[ [\PT_{n,\beta}]^{\mathrm{mot}} \mapsto \epsilon \PT_{n,\beta}^{\mathrm{red}} \]
\[ [\VW_{n,\beta}]^{\mathrm{mot}} \mapsto \epsilon \VW(r,\beta,n). \]
where $\epsilon^2 = 0$. Expending and taking the coefficient of $\epsilon^1$ gives the results.
In other words, the reduced wall-crossing formula boils down to taking the linear term in the wall-crossing formula, ignoring higher quadratic contributions, see \cite{ThomasK3}.
\end{proof}

Similarly, by the arguments of \cite{Toda} we have:

\begin{thm} \label{prop:DT dependence K3}
Let $v=(r,\beta,n) \in H^{\ast}(X,\BZ)$.
\begin{enumerate}
\item[(i)] The invariant $\VW(v)$ does not depend on the choice of polarization used to define it.
\item[(ii)] The invariant $\VW(v)$ depends upon $v$ only through
\begin{itemize}
\item the Mukai square $v^2 = \beta^2 - 2rn - 2r^2$
\item the divisibility $\gcd(r,\beta,n)$
\end{itemize}
\end{enumerate}
\end{thm}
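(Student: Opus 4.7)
The proof will mirror the argument of Theorem~\ref{prop:DT dependence} for the Enriques surface, with the two modifications dictated by the K3 geometry: the wall-crossing must be performed in the reduced K-theoretic setting, and the relevant transitivity statement lives on the Mukai lattice $H^*(S,\BZ)$ rather than $H^*(Y,\BZ)$.

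For part (i), the plan is to apply a reduced K-theoretic wall-crossing formula across walls in the ample cone of $S$. At each wall the difference of invariants is written as a sum indexed by Harder-Narasimhan/Jordan-H\"older filtrations of strictly semistable objects, each contribution being weighted by a product of quantum integers $[\chi(E_i,E_j)]_t$. Antisymmetry of $\chi$ on the Calabi-Yau $S\times\BC$ forces these contributions to cancel pairwise, just as in the unrefined case. The technical ingredient is the reduced refinement of the Kuhn-Liu-Thimm formalism carried out by Thomas \cite{ThomasK3}; once that is in place, polarization independence is immediate.

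For part (ii), the strategy is to use the action of the derived monodromy group on the Mukai lattice, generated by tensoring with line bundles, shifts, and Fourier-Mukai/spherical twist autoequivalences (Mukai, Orlov; see also the treatment via Markman's monodromy results). A classical result asserts that two Mukai vectors on a K3 surface are equivalent under this group if and only if they share the same square and divisibility. Each generating autoequivalence induces, after possibly absorbing a change of polarization using (i), an isomorphism between the relevant moduli spaces which respects the NO-twisted virtual structure sheaf. Composing these identifications yields $\VW(v) = \VW(v')$ whenever $v^2 = (v')^2$ and $\mathrm{div}(v) = \mathrm{div}(v')$. Note that there is no analogue of the ``type'' dichotomy seen for the Enriques surface, since $K_S = \CO_S$ removes the source of $2$-torsion responsible for that distinction.

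The main obstacle is verifying compatibility of the canonical square root $K_{\vir}^{1/2}|_{M^T}$ with the derived autoequivalences: one must check that a Fourier-Mukai transform matches the two square roots on fixed loci of the $\BC^*$-action on both sides. I expect this to follow from the techniques of \cite{ThomasK3} applied to each generator separately. An alternative, perhaps cleaner, route is to deduce (ii) directly from (i) combined with Theorem~\ref{thm:Todas formulaK3}: the right-hand side of Toda's equation can be inverted recursively in $r$, so any dependence structure on the PT side (where only $\beta^2$ enters, up to the $\Exp$-type combinatorics) transfers automatically to $\VW$, bypassing any discussion of autoequivalences.
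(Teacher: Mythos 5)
Your main argument is essentially the paper's: the paper disposes of this theorem with the single line ``by the arguments of \cite{Toda}'', and those arguments are exactly what you describe --- derived autoequivalences of the K3 act on the Mukai lattice with orbits classified by square and divisibility, they induce identifications of moduli spaces compatible with the (reduced, NO-twisted) invariants, and polarization independence comes from wall-crossing, with the refined/reduced wall-crossing input supplied by \cite{ThomasK3} (exactly as in the Enriques case, Theorem~\ref{prop:DT dependence}, where the same orbit-plus-wall-crossing argument is used). Your remark that the ``type'' dichotomy disappears because $K_S=\CO_S$ is also the right explanation.

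However, the ``alternative, perhaps cleaner, route'' you sketch at the end does not work and you should drop it. Toda's equation (Theorem~\ref{thm:Todas formulaK3}) gives, for each fixed pair $(n,\beta)$, a \emph{single} scalar relation
\[
(-1)^n\,\PT_{n,\beta} \;=\; -\sum_{r\ge 0} [n+2r]_t\, \VW(r,\beta,n),
\]
whose unknowns $\VW(r,\beta,n)$, $r\ge 0$, appear nowhere else: the equations for different values of $n$ involve disjoint sets of unknowns. So knowledge of the PT side cannot isolate any individual $\VW(r,\beta,n)$, let alone prove that $\VW(1,\beta,n)=\VW(0,\beta',n')$ when the squares and divisibilities agree --- a statement that mixes different $r$ and $n$ and cannot be a consequence of how $\PT_{n,\beta}$ depends on $\beta$. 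The inversion used in Proposition~\ref{prop:PT VW for K3} (and in \cite[Prop.\ 5.16]{Enriques}) only becomes possible \emph{after} one assumes part (ii), which collapses the unknowns to a function of $v^2$ and the divisibility and couples the equations across $n$. Using that inversion to prove (ii) is therefore circular; the autoequivalence argument is genuinely needed.
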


We define the invariants $\vw$ and $\Pt$ as follows:
\begin{gather*}
\PT_{n,\beta} = \sum_{k | (n,\beta)} \frac{(-1)^{n-n/k}}{[k]_t} \Pt_{\beta/k,n/k}(t^k) \\
\VW(r,\beta,n) = \sum_{k|(r,\beta,n)} \frac{1}{[k]_t^2} \vw(r/k, \beta/k, n/k).
\end{gather*}
Toda's equation then can be rewritten:
\begin{equation} \label{bla2}
\Pt_{n,\beta} (-1)^n =
\begin{cases}
-\sum_{r \geq 0} [n+2r]_t \vw(r,\beta,n) & \text{ if } n \geq 0 \\
-\sum_{r > 0} [ |n| + 2r ]_t \vw(r,\beta,|n|) & \text{ if } n<0.
\end{cases}
\end{equation}

We get the following equivalence:
\begin{prop} \label{prop:PT VW for K3}
The following are equivalent:
\begin{enumerate}
	\item[(a)] $\sum_{n} \Pt_{n,\beta} (-p)^n = 
	\left[ 
	\frac{-(t^{1/2} - t^{-1/2})}{\Theta( t, q) \Theta( t^{-1/2} p,q) \Theta( t^{1/2} p,q) \Delta(q)} \right]_{q^{\beta^2/2}}$ \\[2pt]
	\item[(b)] $\vw(r,\beta,n) =
	\left[
	\frac{ (t^{1/2}-t^{-1/2})^2 }
	{ \Theta(t, q)^2 \Delta(q) } \right]_{q^{\beta^2/2-rn-r^2}}$
\end{enumerate}
\end{prop}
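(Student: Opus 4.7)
The plan is to reduce both (a) and (b) to a single generating-function identity obtained from Toda's equation \eqref{bla2} and the Zagier identity in Proposition~\ref{prop:Jac identity K3}. Setting $A(q) := (t^{1/2}-t^{-1/2})^2/\bigl(\Theta(t,q)^2 \Delta(q)\bigr)$, condition (b) literally asserts $\vw(r,\beta,n) = [A(q)]_{q^{\beta^2/2-rn-r^2}}$, which is consistent with Theorem~\ref{prop:DT dependence K3}(ii): $\vw$ depends only on $v^2 = \beta^2 - 2rn - 2r^2$.

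First I would prove (b) $\Rightarrow$ (a). Substituting $\vw(r,\beta,n) = [A(q)\, q^{rn+r^2}]_{q^{\beta^2/2}}$ into \eqref{bla2} and forming $\sum_n \Pt_{n,\beta}(-p)^n$, the $(-1)^n$ in Toda's equation cancels the one in $(-p)^n$ and I obtain
\[
\sum_n \Pt_{n,\beta}(-p)^n \;=\; -\bigl[A(q) \cdot G(p,q)\bigr]_{q^{\beta^2/2}},
\]
where, after sending $n \to -n$ in the $n<0$ branch to combine the two cases,
\[
G(p,q) \;=\; \sum_{\substack{n\geq 0 \\ r\geq 0}} p^n [n+2r]_t\, q^{rn+r^2} \;+\; \sum_{\substack{n> 0 \\ r> 0}} p^{-n} [n+2r]_t\, q^{rn+r^2}.
\]
Splitting $G(p,q)$ according to whether $r=0$ (in which case $[0]_t=0$ kills the $n=0$ contribution), $n=0$, or $n,r\geq 1$ reproduces precisely the three groups of terms on the left-hand side of Proposition~\ref{prop:Jac identity K3}. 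Hence $G(p,q) = \Theta(t,q)/\bigl((t^{1/2}-t^{-1/2})\,\Theta(t^{1/2}p,q)\,\Theta(t^{-1/2}p,q)\bigr)$. Multiplying $A(q)$ by this, one factor of $\Theta(t,q)$ and one factor of $(t^{1/2}-t^{-1/2})$ cancel and the remaining expression matches the formula in (a).

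The converse (a) $\Rightarrow$ (b) follows because the manipulation is reversible at the level of formal Laurent series: the identity $A(q)\cdot G(p,q) = -[\text{RHS of (a)}]$ determines $A(q)$ uniquely, since $G(p,q)$ is invertible in the appropriate ring of formal Laurent series in $p,q$ (its leading $p$-expansion $\sum_{n\geq 1}[n]_t p^n + O(q)$ pins it down up to units). Equivalently, one may invert \eqref{bla2} coefficient-by-coefficient: extracting the coefficient of $p$ from $\sum_n \Pt_{n,\beta}(-p)^n$ gives a triangular recursion $\Pt_{1,\beta} = b(\beta^2/2) + [3]_t b(\beta^2/2-2) + \cdots$ for the function $b(m) := \vw(r,\beta,n)|_{\beta^2/2-rn-r^2=m}$, which can be solved by induction on $m$ using the vanishing $b(m)=0$ for $m$ sufficiently negative.

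The main obstacle is careful bookkeeping: verifying the signs produced by the two branches of \eqref{bla2}, and checking that the $p \leftrightarrow p^{-1}$ symmetry of the Zagier identity is correctly reproduced by combining the $n\geq 0$ and $n<0$ parts of Toda's equation together with the $[0]_t=0$ vanishing. Once the matching with the left-hand side of Proposition~\ref{prop:Jac identity K3} is in place, that identity supplies the substantive analytic input and both implications become formal.
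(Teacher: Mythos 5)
Your proposal is correct and follows essentially the same route as the paper: substitute (b) into Toda's equation \eqref{bla2}, recognize the resulting double sum as the left-hand side of the Zagier identity in Proposition~\ref{prop:Jac identity K3}, and obtain the converse by observing that \eqref{bla2} determines $\vw$ uniquely from $\Pt$ (the paper cites the triangularity argument of \cite[Prop.~5.16(ii)]{Enriques}, which is exactly your coefficient-by-coefficient recursion).
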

\begin{proof}
Assume (b) first. By \eqref{bla2} if (b) holds, then $\vw(r,\beta,n)$ and hence $\Pt_{n,\beta}$ only depends on $\beta$ through its square $\beta^2$.
Hence its enough to prove (a) for $\beta$ running over a choice of curve class $\beta_d$ with $\beta_d^2=2d$ for $d \geq -1$. Then we get
\begin{align*}
\sum_{n,d} \Pt_{n,\beta_d} (-p)^n q^d
& = 
- \sum_{\substack{r,n \geq 0 \\ d \geq -1}} 
[n+2r]_t \vw(r,\beta_d,n) p^n q^d 
- \sum_{r,n>0, d \geq -1} [n+2r]_t \vw(r,\beta_d,n) p^{-n} q^d \\
& = 
\frac{ (t^{1/2}-t^{-1/2})^2 }
{ \Theta(t, q)^2 \Delta(q) } 
\cdot \left( - \sum_{r,n \geq 0} [n+2r]_t p^n q^{rn+r^2} - \sum_{r,n>0} [n+2r]_t p^{-n} q^{rn+r^2} \right) \\
& = 
\frac{ (t^{1/2}-t^{-1/2})^2 }
{ \Theta(t, q)^2 \Delta(q) } 
\cdot 
\frac{-1}{t^{1/2}-t^{-1/2}}
\frac{\Theta(t,q)}{\Theta(t^{1/2}p,q) \Theta(t^{-1/2} p, q)}
\end{align*}
which proves (a). Conversely, similar arguments as in 
part (ii) of the proof of \cite[Proposition 5.16]{Enriques} 
shows that $\vw(r,\beta,n)$ is uniquely determined from $\Pt_{n,\beta}$ through \eqref{bla2},
so if (a) holds, then there is at most one solution for $\vw(r,\beta,n)$ which then must be given by (b).
\end{proof}

\begin{rmk}
Thomas on \cite{ThomasK3} also proves that (a) above implies (b) using a different wall-crossing. He moreover then shows (a) using a double-cosection argument.
\end{rmk}


\begin{thebibliography}{10}


\bibitem{Beckmann}
T. Beckmann,
{\em Birational geometry of moduli spaces of stable objects on Enriques surfaces}, Selecta Math. (N.S.) {\bf 26} (2020), no. 1, Paper No. 14, 18 pp.

\bibitem{BS}
A. Brini, Y. Schuler,
{\em Refined Gromov-Witten invariants}, in preparation.

\bibitem{123}
J. H. Bruinier, G. van der Geer, G. Harder, D. Zagier,
{\em The 1-2-3 of modular forms},
Lectures from the Summer School on Modular Forms and their Applications held in Nordfjordeid, June 2004. Edited by Kristian Ranestad. Universitext. Springer-Verlag, Berlin, 2008. x+266 pp. 

\bibitem{BG}
J. Bryan, A. Gyenge,
{\em G-fixed Hilbert schemes on K3 surfaces, modular forms, and eta products}, \'Epijournal Géom. Alg\'ebrique {\bf 6} (2022), Art. 6, 22 pp.
Bryan Gyenge 


\bibitem{EnriquesBook}
F. Cossec, I. Dolgachev, C. Liedtke,
{\em Enriques Surfaces I}, Book, Version April 19, 2023,
available from \url{https://dept.math.lsa.umich.edu/~idolga/EnriquesOne.pdf}.

\bibitem{FG}
B. Fantechi, L. G\"ottsche,
{\em Riemann-Roch theorems and elliptic genus for virtually smooth schemes},
Geom. Topol. {\bf 14} (2010), no. 1, 83--115.


\bibitem{GK1}
L. G\"ottsche, M. Kool,
{\em Virtual refinements of the Vafa-Witten formula}, Comm. Math. Phys. {\bf 376} (2020), no.1, 1--49.

\bibitem{GK2}
L. G\"ottsche, M. Kool,
{\em Refined SU(3) Vafa-Witten invariants and modularity}, Pure Appl. Math. Q.{\bf 14} (2018), no.3-4, 467--513.


\bibitem{GS}
L. G\"ottsche, V. Shende,
{\em The $\chi_{-y}$-genera of relative Hilbert schemes for linear systems on Abelian and K3 surfaces}, Algebr. Geom., {\bf 2} (2015), no. 4, 405--421.


\bibitem{Joyce}
D. Joyce, {\em Enumerative invariants and wall-crossing formulae in abelian categories}, arXiv:2111.04694


\bibitem{Knutsen}
A. L. Knutsen,
{\em On moduli spaces of polarized Enriques surfaces},
J. Math. Pures Appl. (9) {\bf 144} (2020), 106--136.

	\bibitem{KLT}
	N. Kuhn, H. Liu, F. Thimm,
	{\em The 3-fold K-theoretic DT/PT vertex correspondence holds},
	arXiv:2311.15697
	
	


\bibitem{Liu}
H. Liu, {\em Semistable refined Vafa-Witten invariants}, arXiv:2309.03673.

\bibitem{NO}
N. Nekrasov, A. Okounkov,
{\em Membranes and sheaves},
Algebr. Geom. {\bf 3} (2016), no. 3, 320--369.


\bibitem{N1}
H. Nuer, {\em A note on the existence of stable vector bundles on Enriques surfaces}, Sel. Math. (N.S.) {\bf 22} (2016), 1117--1156.

\bibitem{N2}
H. Nuer, {\em Projectivity and birational geometry of Bridgeland moduli spaces on an Enriques surface}, Proc.
Lond. Math. Soc. (3) {\bf 113} (2016), 345--386.


\bibitem{NY}
H. Nuer, K. Yoshioka, {\em MMP via wall-crossing for moduli spaces of stable sheaves on an Enriques
surface},
Adv. Math. {\bf 372} (2020), 107283, 119 pp


\bibitem{QuasiK3}
G. Oberdieck,
{\em Multiple cover formulas for K3 geometries, wall-crossing, and Quot schemes}, to appear in Geometry \& Topology,
arXiv:2111.11239

\bibitem{Enriques} G. Oberdieck,
{\em Curve counting on the Enriques surface and the Klemm-Marino formula},
to appear in JEMS, arXiv:2305.11115

\bibitem{RefinedEnriques2}
G. Oberdieck,
{\em Towards refined curve counting on the Enriques surface II: Motivic refinements},
Preprint.

\bibitem{Okounkov}
A. Okounkov,
{\em Lectures on K-theoretic computations in enumerative geometry},
IAS/Park City Math. Ser., 24
American Mathematical Society, Providence, RI, 2017, 251–380.




\bibitem{JS} D. Joyce, Y. Song,
{\em A theory of generalized Donaldson-Thomas invariants},
Mem. Amer. Math. Soc. {\bf 217} (2012), no. 1020, iv+199 pp.


\bibitem{KM1}
A. Klemm, M. Mari\~{n}o,
{\em Counting BPS states on the Enriques Calabi-Yau},
Comm. Math. Phys. {\bf 280} (2008), no. 1, 27--76. 


\bibitem{PT_BPS}
R. Pandharipande, R. P. Thomas,
{\em Stable pairs and BPS invariants}, J. Amer. Math. Soc. {\bf 23}(2010), no.1, 267--297.



\bibitem{Sacca}
G. Sacc\`a,
{\em Relative compactified Jacobians of linear systems on Enriques surfaces}, Trans. Amer. Math. Soc. {\bf 371} (2019), no. 11, 7791--7843.

\bibitem{Schuler}
Y. Schuler,
{\em Topics in Gromov-Witten theory}, PhD thesis, University of Sheffield,
available from \url{https://etheses.whiterose.ac.uk/35191/}

\bibitem{TT1}
Y. Tanaka, R. P. Thomas, {\em Vafa-Witten invariants for projective surfaces I: stable case}, J. Algebraic Geom. {\bf 29} (2020), no. 4, 603--668.
 
\bibitem{TT2}
Y. Tanaka, R. P. Thomas, {\em Vafa-Witten invariants for projective surfaces II: semistable case}, Pure Appl. Math. Q. {\bf 13} (2017), no. 3, 517--562.


\bibitem{Thomas} 
R. P. Thomas,
{\em Equivariant  K-theory and refined Vafa-Witten invariants},
Comm. Math. Phys. {\bf 378} (2020), no. 2, 1451--1500.

\bibitem{ThomasK3} R. P. Thomas,
{\em Refined sheaf counting on local K3 surfaces},
arXiv:2403.12741

\bibitem{Toda}
Y. Toda, {\em Stable pairs on local K3 surfaces}, J. Differential Geom. {\bf 92} (2012), no. 2, 285--371



\bibitem{Yoshioka}
K. Yoshioka,
{\em Moduli spaces of stable sheaves on Enriques surfaces}, Kyoto J. Math. {\bf 58} (2018), no. 4, 865--914.

\bibitem{Zagier}
D. Zagier, {\em Periods of modular forms and Jacobi theta functions}, Invent. Math. {\bf 104} (1991), no. 3, 449--465. 


\end{thebibliography}
\end{document}